\newtheorem{thm}{Theorem}[section]
\newtheorem{prop}[thm]{Proposition}
\newtheorem{cor}[thm]{Corollary}
\theoremstyle{definition}
\newtheorem{definition}[thm]{Definition}
\newtheorem{example}[thm]{Example}
\theoremstyle{remark}
\newtheorem{remark}[thm]{Remark}
\numberwithin{equation}{section}
\newcommand{\Z}{\mathbb{Z}}  
\newcommand{\C}{\mathbb{C}}  
\newcommand{\A}{\mathcal{A}}
\newcommand{\G}{\Gamma}  
\begin{document}

\title{On Betti numbers of Milnor fiber of hyperplane arrangements}

\author{KaiHo Tommy Wong}
\author{Yun Su}
\address{Department of Mathematics, University of Wisconsin-Madison, 
Madison, WI 53706}
\email{wkhtommy@gmail.com}
\email{ysu@math.wisc.edu}

\begin{abstract}
We find a combinatorial upper bound for the first Betti number of the Milnor fiber for central hyperplane arrangements, which improves previous similar results in \cite{DM} and in \cite{CDO}. In particular, we obtain a combinatorial obstruction for trivial algebraic monodromy of the first homology of Milnor fiber. Calculations and comparisons to known examples in \cite{CS} will be provided.

\end{abstract}

\maketitle
\section{Introduction}

Let $\A$ be a central hyperplane arrangement in $\C^{n+1}$ and $H_i,i=1,2,...,d$ be the defining equations of the hyperplanes of $\A$. Let $f=\prod_i H_i$ and $M(\A)=\C^{n+1}\setminus \A$. There is a $C^{\infty}$-locally trivial fibration $$F\hookrightarrow M(\A) \xrightarrow{f} \C^*,$$ where $F$ is called the Milnor fiber \cite{SP}. $F$ can be identified as the affine hypersurface $\{f=1\}$ in $\C^{n+1}$.

Many open questions have been raised subject to $F$. For instance, it is known that the integral cohomology ring of $M(\A)$ is given by the Orlik-Soloman algebra, which only depends on the intersection lattice  $L(\A)$ of $\A$ \cite{OS}. It has been conjectured that the integral homology, or the characteristic polynomial, hence the Betti numbers, of $F$ are also determined by $L(\A)$.

There are active work on this conjecture in the case of $\C^3$ with controls of the local multiplicities of the 1-flats, complex line given by intersection of the hyperplane components. Assume that the multiplicities of the 1-flats are at most 3 \footnote{projective line arrangement with only double and triple points as singularities}. Libgober showed that the characteristic polynomial of $F$ can be computed explicitly using superabundance (\cite{EM}). Moreover the characteristic polynomial is combinatorially determined by $L(\A)$, by the work of Papadima and Suciu in \cite{3C}. We refer to \cite{HS} for a survey of the study of Milnor fiber of arrangements.

While the general conjecture is still open, in the case of $\C^3$, Massey obtained an upper bound for the Betti numbers of the Milnor fiber with the combinatoris in $L(\A)$ using vanishing cycles \cite{DM}. A better combinatorial bound is obtained in \cite{CDO} by Cohen, Dimca, and Orlik, using perverse sheaves theory. In this paper, we improve these previous results by method similar to those in \cite{EM}and \cite{TT}. Specifically, we prove the following theorem.

\begin{thm} Suppose $\A=\{H_i\}_{i=1}^d\subset \C^3$ is a central hyperplane arrangement. Let $F$ be the Milnor fiber of $\A$. For each 1-flat $L$, let $d_L$ denote the number of hyperplanes containing $L$. Then $$d-1\leq b_1(F) \leq d-1 + min_{H_i}(\sum_{L\in H_i}(d_L-2)((d_L,d)-1)),$$ where $(d_L,d)$ is the greatest common factor of $d_L$ and $d$. Moreover, the characteristic polynomial of the monodromy action on $H_1(F,\C)$ has the form $$(t-1)^{d-1}p(t),$$ where $p(t)$ divides $$gcd_{H_i}(\prod_{L\in H_i} (\frac{t^{(d_L,d)}-1}{t-1})^{d_L-2} ).$$ 
\end{thm}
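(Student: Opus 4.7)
The plan is to pass to the projectivization and use an eigenspace decomposition of $H^1(F,\C)$ under the monodromy, then bound each non-trivial eigenspace by localizing near a chosen hyperplane $H_i$, in the spirit of Libgober's approach in \cite{EM} together with the hyperplane-restriction refinement of \cite{TT}.

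First, let $\bar{\A}\subset\mathbb{CP}^2$ be the projective line arrangement associated to $\A$ and set $M(\bar{\A})=\mathbb{CP}^2\setminus\bar{\A}$. Since $M(\A)\cong\C^*\times M(\bar{\A})$ and $f$ is a degree-$d$ monomial in the $\C^*$ direction, $F$ is the cyclic $d$-fold cover of $M(\bar{\A})$ in which every meridian lifts with monodromy $e^{2\pi i/d}$. Transferring the covering gives the monodromy-eigenspace decomposition
\[
H^1(F,\C)\ =\ \bigoplus_{k=0}^{d-1} H^1\bigl(M(\bar{\A}),\Ls_k\bigr),
\]
where $\Ls_k$ is the rank one local system with monodromy $e^{2\pi i k/d}$ around every line. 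The $k=0$ summand has dimension $b_1(M(\bar{\A}))=d-1$, which simultaneously produces the lower bound $b_1(F)\geq d-1$ and contributes the factor $(t-1)^{d-1}$ in the characteristic polynomial.

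For each $k\neq 0$ I would bound $\dim H^1(M(\bar{\A}),\Ls_k)$ by localizing near a chosen hyperplane $H_i\in\A$. Because $\Ls_k$ has non-trivial monodromy around $H_i$, a Mayer--Vietoris argument (alternatively, the vanishing cycle computation of \cite{CDO}) confines all contributions to small neighborhoods of the 1-flats $L\subset H_i$. A punctured neighborhood of such $L$ in $M(\bar{\A})$ is the complement of $d_L$ concurrent lines in $\C^2$, whose cyclic $d_L$-fold Milnor fiber has characteristic polynomial of monodromy on $H^1$ equal to $(t-1)^{d_L-1}\bigl((t^{d_L}-1)/(t-1)\bigr)^{d_L-2}$. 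Since the global monodromy has order dividing $d$, only those eigenvalues that are simultaneously $d_L$-th and $d$-th roots of unity can survive in $H^1(F,\C)$, i.e.\ the $(d_L,d)$-th roots of unity. Hence the local contribution at $L$ divides $\bigl((t^{(d_L,d)}-1)/(t-1)\bigr)^{d_L-2}$, a polynomial of degree $(d_L-2)((d_L,d)-1)$. Summing over $L\in H_i$ yields both the numerical bound and the polynomial divisibility for that choice of $H_i$, and then taking the minimum (respectively the greatest common divisor) over $i$ produces the sharper stated bounds.

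The principal technical obstacle is justifying the localization rigorously: one must show that away from a tubular neighborhood of $H_i$ the $\Ls_k$-cohomology makes no contribution, even though 1-flats not on $H_i$ exist. I expect this to require either the Esnault--Viehweg vanishing for logarithmic connections on a suitable log resolution of $\bar{\A}$ (as in \cite{EM}), or a perverse-sheaf / nearby cycle computation along $H_i$; both routes rely crucially on $\Ls_k$ having non-trivial monodromy around $H_i$ for $k\neq 0$. Keeping track of eigenvalues (not merely dimensions) throughout the local-to-global assembly is what upgrades the Betti number bound to the divisibility statement for the characteristic polynomial.
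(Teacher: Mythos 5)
Your overall architecture --- decompose $H_1(F,\C)$ into monodromy eigenspaces, isolate the $(t-1)^{d-1}$ factor from the trivial eigenvalue, and bound each non-trivial eigenspace by local contributions from the $1$-flats lying on a single chosen hyperplane $H_i$ --- matches the shape of the paper's argument, and your local computation at each flat (the complement of $d_L$ concurrent lines, Libgober's formula $(t-1)(t^{d_L}-1)^{d_L-2}$, and the restriction to common $d$-th and $d_L$-th roots of unity) reproduces exactly the factors $\bigl((t^{(d_L,d)}-1)/(t-1)\bigr)^{d_L-2}$ that appear in the theorem. However, the step you yourself flag as ``the principal technical obstacle'' --- that for $k\neq 0$ all of $H^1(M(\bar{\A}),\Ls_k)$ is controlled by a tubular neighborhood of the single hyperplane $H_i$, even though $1$-flats off $H_i$ exist --- is precisely the content that makes this theorem an improvement over Massey's bound (which sums over \emph{all} flats), and you have not supplied it. Gesturing at Esnault--Viehweg vanishing or a nearby-cycle computation is not a proof; in particular the perverse-sheaf route of \cite{CDO} as it stands only yields dimension bounds on eigenspaces, not the divisibility of the characteristic polynomial, so upgrading it is itself nontrivial.

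The paper closes this gap differently: it works with the infinite cyclic cover and the Alexander module $H_1(M(\A);\G)$, $\G=\C[t^{\pm 1}]$, and proves a surjection $H_1(T(H_1)\setminus\A;i^*\G)\twoheadrightarrow H_1(M(\A);\G)$ by a Lefschetz hyperplane section argument in the style of Dimca--Libgober: $\pi_i(T)\to\pi_i(M(\A))$ is an isomorphism for $i\leq n-1$ and an epimorphism for $i=n$, so $M(\A)$ is obtained from $T$ up to homotopy by attaching only $(n+1)$-cells, and the same holds for the covers. Localization within $T$ is then handled by the Mayer--Vietoris spectral sequence over the stratification of $H_1$, where the range restrictions kill everything except the $(k,l)=(1,1)$ terms attached to the $1$-flats on $H_1$ (the $l=0$ terms contribute only $\G/(t-1)$). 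If you want to complete your version, you need either to import this divisibility theorem (Theorem 3.5 of the paper) or to prove an honest vanishing statement for $\Ls_k$-cohomology outside $T(H_i)$; without one of these, your argument establishes only the already-known global bounds and not the $\min_{H_i}$ and $\gcd_{H_i}$ refinements that are the point of the theorem.
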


The structure of this paper is as follows. In section 2, we survey some basic facts about the Euler characteristic of the Milnor fiber an arrangement in $\C^3$ and the monodromy of the first homology. The conjecture in the case of $\C^3$ reduces to the study of non-trivial eigenspaces of the first homology of the Milnor fiber. In section 3, the notion of Alexander modules will be introduced for hyperplane arrangement, as another perspective for homology of the Milnor fiber. A divisibility theorem for Alexander modules for arrangements will be stated and the essential parts of the proof will be recalled. Then we prove theorem 1.1 in section 4. Section 5 consists of calculations and comparisons to examples in \cite{CS}. In section 6, we introduce the notion of Milnor fiber with multiplicities, and extend our results to this generalized notion of Milnor fiber. In the last section, we compute an upper bound of the second Betti number of the Milnor fiber of central hyperplane arrangements in $\C^4$.

\section{Basics about Milnor Fiber of Hyperplane Arrangement in $\C^3$}

Let $\A$ be a central arrangement of hyperplanes in $\C^3$ and $H_i,i=1,2,...,d$ be the defining equations of the hyperplanes in $\A$. The Milnor fiber $F$ can be identified with the hypersurface $\{\prod_i H_i = 1\}\subset\C^3$. A 1-flat $L$ of $\A$ is formed by the intersection of two or more hyperplanes in $\A$. For each 1-flat $L$, let $d_L$ denote the number of hyperplanes in the arrangement which contain $L$.

Since $f=\prod_i H_i$ is homogeneous, there is a fibration, called the Milnor fibration, $$F\hookrightarrow M(\A) \xrightarrow{f} \C^*.$$ The geometric monodromy $h:F\rightarrow F$ of the fibration is given by  coordinate-wise multiplication of $\xi$, where $\xi = e^{2\pi i /d}$ is a primitive $d$-root of unity. This monodromy automorphism generates a cyclic $\Z_d$ free action on $F$. The quotient space $F/\Z_d$ can be identified to the projective complement $\mathbb{P}^2\setminus [\A]$, where $[\A]$ is the image of $\A$ in $\mathbb{P}^2$ under the Hopf bundle \cite{CS}. Moreover, the Hopf bundle factors through this regular $d$-fold covering $F\rightarrow F/\mathbb{Z}_d$. The covering map is the composition $F\hookrightarrow M(\A) \xrightarrow{\text{Hopf}} \mathbb{P}^2\setminus [\A]$.

Therefore, the Euler characteristic $\chi(F)$ equals to $d\cdot \chi(\mathbb{P}^2\setminus [\A])$ and can be explicitly computed using local multiplicities of the flats of $\A$.

\begin{prop}
$\chi(\mathbb{P}^2\setminus [\A]) = 3-2d+\sum_L(d_L-1)$
\end{prop}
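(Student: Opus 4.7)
The plan is to use the additivity of the (compactly supported) Euler characteristic on a stratification of $\mathbb{P}^2$ by $[\A]$ and its complement. Since $\chi(\mathbb{P}^2) = 3$, it suffices to show that $\chi([\A]) = 2d - \sum_L(d_L - 1)$, after which the identity $\chi(\mathbb{P}^2\setminus[\A]) = 3 - \chi([\A])$ gives the desired formula.

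To compute $\chi([\A])$, I would stratify $[\A] \subset \mathbb{P}^2$ into its singular locus $P$, namely the (finite) set of points where two or more of the projective lines $[H_i]$ meet, together with the smooth part $[\A] \setminus P$. Note that $P$ is in bijection with the set of 1-flats $L$ of $\A$, since each 1-flat is a line in $\mathbb{C}^3$ cut out by two or more of the $H_i$, and its image in $\mathbb{P}^2$ is a point lying on exactly $d_L$ of the lines $[H_i]$. The smooth stratum decomposes as a disjoint union $[\A]\setminus P = \bigsqcup_{i=1}^d \bigl([H_i] \setminus P_i\bigr)$, where $P_i := P \cap [H_i]$. Since each $[H_i] \cong \mathbb{P}^1$ has Euler characteristic $2$, removing $|P_i|$ points yields $\chi([H_i]\setminus P_i) = 2 - |P_i|$.

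Now I would apply a double-counting argument: each 1-flat $L$ lies on exactly $d_L$ of the hyperplanes, so $\sum_i |P_i| = \sum_L d_L$. Combined with $|P| = \sum_L 1$, this gives
$$\chi([\A]) = \chi(P) + \sum_{i=1}^d \chi([H_i]\setminus P_i) = |P| + 2d - \sum_i |P_i| = 2d - \sum_L (d_L - 1),$$
which yields $\chi(\mathbb{P}^2 \setminus [\A]) = 3 - 2d + \sum_L(d_L - 1)$ as required.

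There is no real obstacle here; the only point that merits a line of justification is the use of additivity of Euler characteristic on locally closed constructible pieces, which is standard since all strata in play are complex algebraic. I would either invoke additivity of $\chi_c$ and the fact that $\chi = \chi_c$ for complex algebraic varieties, or alternatively give a Mayer--Vietoris / long-exact-sequence argument for the open--closed pair $([\A],\,\mathbb{P}^2\setminus[\A])$ in $\mathbb{P}^2$.
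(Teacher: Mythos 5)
Your proof is correct and follows essentially the same route as the paper: both stratify $[\A]$ into the finite set of intersection points (in bijection with the $1$-flats) and the punctured projective lines $[H_i]$ minus their intersection points, then use additivity of the Euler characteristic together with the double count $\sum_i |P_i| = \sum_L d_L$ (the paper's $\sum_i a_i = \sum_p m_p$). No gaps.
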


\begin{proof}
Recall that Euler characteristic is additive in the complex algebraic setting. Let $[H_i],i=1,...,d$ be the projective lines in $[\A]$ and $a_i$ be the number of intersection points between lines on $[H_i]$. Denote these intersection points by $p$ and their multiplicities by $m_p$. The set $\{p\}$ of intersection points are in one to one correspondence with the set of 1-flat $L$ in the affine arrangement $\A$. The multiplicities $m_p$'s are in correspondence with $d_L$'s.

\begin{align*}
\chi(\mathbb{P}^2\setminus [\A])& = \chi(\mathbb{P}^2) - \chi([\A]) \\
                &= 3- \chi(\sqcup_i ([H_i]\setminus \text{intersection points})) - \chi(\text{intersection points})\\
                &= 3 - \sum_i (2-a_i) - \sum_p 1\\
                & =3 -2d +\sum_i a_i - \sum_p 1 \\
                & =3-2d + \sum_p m_p -\sum_p 1 \\
                & = 3-2d +\sum_L(d_L-1)
\end{align*}

\end{proof}

\begin{prop}(cf \cite{DM})
$b_2(F) = b_1(F)-(2d-1)(d-1)+d\sum_L(d_L-1)$
\end{prop}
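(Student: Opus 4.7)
The plan is to equate two expressions for the Euler characteristic of $F$: a topological one coming from Betti numbers, and a combinatorial one already supplied by Proposition 2.1 together with the covering map $F \to F/\Z_d$.

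First I would use that $F$ is the Milnor fiber of the homogeneous polynomial $f = \prod_i H_i$ in $\C^3$; equivalently, $F$ is a smooth affine (hence Stein) complex surface, so it has the homotopy type of a CW complex of real dimension at most $2$. This gives $b_k(F) = 0$ for all $k \geq 3$. Since $\A$ is central (so $f$ vanishes at the origin) and $d \geq 1$, the fiber $F = \{f = 1\}$ is connected, hence $b_0(F) = 1$. Therefore
\[
\chi(F) = 1 - b_1(F) + b_2(F).
\]

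Next I would invoke the $\Z_d$-covering $F \to F/\Z_d \cong \mathbb{P}^2 \setminus [\A]$ recalled in section 2. Multiplicativity of Euler characteristic for finite covers gives $\chi(F) = d \cdot \chi(\mathbb{P}^2 \setminus [\A])$, and Proposition 2.1 evaluates the right-hand side as $d(3 - 2d + \sum_L (d_L - 1))$. Substituting into the previous identity yields
\[
b_2(F) = b_1(F) - 1 + 3d - 2d^2 + d\sum_L (d_L - 1).
\]

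Finally I would verify the algebraic identity $3d - 2d^2 - 1 = -(2d-1)(d-1)$ by expanding the right-hand side, which converts the displayed formula into the claimed expression $b_2(F) = b_1(F) - (2d-1)(d-1) + d\sum_L(d_L-1)$. No step here presents a real obstacle; the only point requiring a moment of care is the vanishing of the higher Betti numbers, and I would justify it simply by citing the Stein (or affine) nature of $F$ and the resulting bound on its homotopical dimension.
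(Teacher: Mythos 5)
Your proposal is correct and is precisely the route the paper intends: the remark following the proposition states it is "a quick corollary of Proposition 2.1," i.e., equating $\chi(F)=1-b_1(F)+b_2(F)$ (using connectivity and the homotopy dimension $\leq 2$ of the affine surface $F$) with $d\cdot\chi(\mathbb{P}^2\setminus[\A])$ from the $\Z_d$-covering. The arithmetic checks out, since $-(2d-1)(d-1)=3d-2d^2-1$.
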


\begin{remark}
Proposition 2.2 implies that the first and the second Betti numbers of $F$ differ by some known quantity, which only involve the degree of the arrangements and the multiplicities of the flats. It was proved by Massey in \cite{DM} using L$\acute{e}$ numbers. On the other hand, it is also a quick corollary of proposition 2.1.
\end{remark}

The (geometric) monodromy of $F$ has finite order $d$, so as the algebraic monodromy on the first homology $$h_*:H_1(F;\C)\rightarrow H_1(F;\C).$$ $h_*$ is diagonalizable and its eigenvalues are $d$-roots of unity. We will denote the dimension of the eigenspace $H_1(F)_{\xi^{k}}$ by $b_1(F)_{\xi^k}$. The homology Wang exact sequence $$...\rightarrow H_1(F)\xrightarrow{h_*-I} H_1(F) \rightarrow H_1(M(\A))\rightarrow H_0(F) \xrightarrow{0} ...$$ yields that $b_1(F)_1 =\dim H_1(M(\A);\C)- \dim H_0(F;\C) = d-1$. As a result, the study of $b_1(F)$ reduces to the study of the eigenspaces of $H_1(F)$ associated to non-trivial eigenvalues, which will be referred as non-trivial eigenspaces in this paper.

\section{Divisibility result for Alexander Modules}

In this section, we regard the Milnor fiber as an infinite cyclic cover of $M(\A)$, up to homotopy. We then can identify the homology of the Milnor fiber as an \textit{Alexander module} over a Laurent polynomial ring, whose order is called the \textit{Alexander polynomial}.

\begin{thm}(\cite{RR})
The first Alexander polynomial of $\A$ is the characteristic polynomial of the monodromy $h_*:H_1(F;\C)\rightarrow H_1(F;\C)$.
\end{thm}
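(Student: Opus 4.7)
The plan is to identify the Milnor fiber $F$, up to homotopy, with the infinite cyclic cover $\widetilde{M}(\A)$ of $M(\A)$ associated to $f_*\colon \pi_1(M(\A))\to \pi_1(\C^*)=\Z$, and then to read off the characteristic polynomial of $h_*$ from the structure of $H_1(F;\C)$ as a module over the principal ideal domain $\Lambda=\C[t,t^{-1}]$. The first step is to pull back $f\colon M(\A)\to \C^*$ along the universal cover $\exp\colon \C\to \C^*$, obtaining a fibration $F\hookrightarrow \widetilde{M}(\A)\to \C$ whose base is contractible, so $\widetilde{M}(\A)\simeq F$. Since $\widetilde{M}(\A)$ is also the regular $\Z$-cover of $M(\A)$ corresponding to $\ker f_*$, $H_1(F;\C)\cong H_1(\widetilde{M}(\A);\C)$ carries a canonical action of the deck group $\Z=\langle t\rangle$. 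This $\Lambda$-module structure realizes $H_1(F;\C)$ as the first Alexander module of $\A$, whose order is by definition the first Alexander polynomial $\Delta_1(t)$.

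Next I would verify that the deck-translation action of $t$ on $H_1(F;\C)$ coincides with the algebraic monodromy $h_*$. A generator of $\pi_1(\C^*,1)$ lifts through $\exp$ to a path from $0$ to $2\pi i$, and the corresponding generator of the deck group of $\widetilde{M}(\A)\to M(\A)$ restricts, under the homotopy equivalence $\widetilde{M}(\A)\simeq F$, to parallel transport around that loop in the Milnor fibration---that is, to the geometric monodromy $h$. Having done this, I would invoke that $F$ has the homotopy type of a finite CW complex, so $H_1(F;\C)$ is finite-dimensional over $\C$ and therefore has no free $\Lambda$-summand. The structure theorem for finitely generated modules over the PID $\Lambda$ yields
\[
H_1(F;\C)\cong \bigoplus_{i}\Lambda/(p_i(t))
\]
with nonzero $p_i(t)\in\C[t]$, so $\Delta_1(t)=\prod_i p_i(t)$ up to a unit of $\Lambda$. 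Multiplication by $t$ on each summand $\Lambda/(p_i(t))$ is conjugate to the companion matrix of $p_i(t)$ and hence has characteristic polynomial $p_i(t)$; assembling the summands gives $\det(tI-h_*)=\prod_i p_i(t)=\Delta_1(t)$.

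The step I expect to be most delicate is the identification of the deck-translation action with the geometric monodromy: one must carefully reconcile two a priori distinct $\Z$-actions on the same space, while tracking a choice of generator of $\pi_1(\C^*)$. The remaining pieces---the pullback construction, the passage from $F$ to $\widetilde{M}(\A)$, and the structure theorem---are essentially formal once the finite CW type of $F$ has been invoked.
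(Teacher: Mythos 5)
The paper offers no proof of this statement: it is quoted from Randell's work and used as a black box, with the identification $H_1(M(\A);\G)\cong H_1(M(\A)_\infty;\C)\cong H_1(F;\C)$ simply asserted in Section~3.1. Your argument is correct and is essentially the standard (Milnor--Randell) proof, so it supplies what the paper omits. Two small points deserve explicit attention. First, you build the infinite cyclic cover from $f_*\colon\pi_1(M(\A))\to\pi_1(\C^*)$, whereas the paper's Alexander module is defined via the total linking number homomorphism $\varepsilon$ sending every meridian to $1\in\Z$; these coincide precisely because each $H_i$ appears with multiplicity one in $f=\prod_i H_i$, so $f_*$ sends each meridian to $1$, and this identification should be stated since it is what makes ``the first Alexander module of $\A$'' in your sense agree with the paper's. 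Second, the deck translation corresponding to your chosen generator of $\pi_1(\C^*)$ is either $h$ or $h^{-1}$ depending on orientation conventions, so a priori you compute $\det(tI-h_*^{\pm1})$; this is harmless here (the polynomial is only defined up to units in $\Lambda$, and $h_*$ preserves an integral lattice so its spectrum is closed under complex conjugation, hence under inversion of the roots of unity), but it is exactly the kind of convention mismatch you flag, and it is worth dispatching explicitly. The remaining steps---contractibility of the base of the pulled-back fibration, torsion-ness of $H_1(F;\C)$ over the PID $\Lambda=\C[t^{\pm1}]$ from finite CW type, and the companion-matrix computation identifying the order of $\bigoplus_i\Lambda/(p_i(t))$ with $\det(tI-h_*)$---are all sound.
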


The degree of the (first) Alexander polynomial is equal to $b_1(F)$. Moreover, the multiplicities of the roots of the Alexander polynomial, which are the eigenvalues of $h_*$, indicate the dimensions of the associated eigenspaces. Here we introduce the notion of Alexander modules for a central arrangement in $\C^{n+1}$, while the definition applies in general to path connected finite CW-complex with free first integral homology.

\subsection{Alexander module of $\A$} (\cite{MT})

Let $\A$ be a central hyperplane arrangement in $\C^{n+1}$ and $M(\A) = \C^{n+1}\setminus \A$. The first integral homology $H_1(M(\A); \Z)$ is isomorphic to $\Z^d$ and is generated by the homology class of the meridians at each hyperplane component in $\A$. By \cite{HS}, the Milnor fiber $F$ is homotopy equivalent to the regular infinite cyclic cover $M(\A)_{\infty}$ of $M(\A)$ defined by the linking number homomorphism $$\varepsilon:\pi_1(M(\A)) \xrightarrow{\text{ab}} H_1(M(\A);\Z)\xrightarrow{\phi} \Z,$$ where $\phi$ sends each generator of $H_1(M(\A);\Z)$ to $1\in \Z$. $\varepsilon$ defines a $\G:=\C[t^{\pm 1}]$ automorphism by multiplication with $t^{\varepsilon(\alpha)}$. Hence, the homology with local coefficients $H_i(M(\A);\G)$, called the \textit{Alexander module}, has the following $\G$-modules identification (\cite{AH}) $$H_i(M(\A);\G) \cong H_i(M(\A)_{\infty};\C) \cong H_i(F;\C).$$ 

Since $F$ is an complex $n$-dimensional affine hypersurface, it has a $n$-dimensional finite CW complex structure (\cite{DB}). $H_i(M(\A);\G)$ is a torsion $\G$-module and a finite dimensional $\C$-vector space. Since $\G$ is a principal ideal domain, the Alexander module has the identification $\G/\lambda_1\oplus ...\oplus \G/\lambda_m$. The order $\lambda_1\cdots \lambda_m$ of the  Alexander module, called the \textit{Alexander polynomial}, is defined up to units in $\G$. The Alexander polynomial is the same as the characteristic polynomial $det(h_*-tI)$ of the Milnor fiber (\cite{RR}). We have a remark regarding Alexander modules in general.

\begin{remark}\cite{DN}
Let $X\subset Y$ be finite CW-complexes and their first integral homology are free. Let $\varepsilon:\pi_1(Y)\twoheadrightarrow \Z$ and $\G=\C[t^{\pm 1}]$ be the local coefficient defined by $\varepsilon$. Then $H_j(X;i^*\G)\xrightarrow{i_*} H_j(Y;\G)$ is a $\G$-module homomorphism and $H_0(X;\G)\cong H_0(Y;\G)\cong \G/(t-1).$
\end{remark}

\subsection{Stratification of $\A$ and local Alexander polynomials} (\cite{EM},\cite{TT})

In this section, we introduce a natural stratification of $\A$ and define the notion of local polynomials.

Define two points in $\A$ to be equivalent if the two collections of hyperplanes in $\A$ containing each point coincide. The equivalence classes form stratification of $\A$ and can be explicitly described: if $S\in L(\A)$, then the corresponding strata $X_S$ is $$X_S = S\setminus \cup_{S\nleq H} H.$$

Let $T(H_1)$ be a regular tubular neighborhood of $H_1$ in $\C^{n+1}$ and $T=T(H_1)\setminus \A$. According to the stratification, $T$ can be decomposed into subspaces; each of which corresponds to exactly one strata in $H_1$, such that it trivially fibers over the strata. Denote $X_j^k$ as the $j$-th $k$ dimensional strata in $H_1$ and $Y_j^k$ as the associated subspace in the tube $T$. The fiber of the trivial fibration is the complement of $\A$ in a small complex disk $\mathbb{D}_j^{n+1-k}$, transversal to $X_j^k$. Moreover, $\mathbb{D}_j^{n+1-k} \cap \A$ is a central hyperplane arrangement in $\mathbb{D}_j^{n+1-k} \cong \C^{n+1-k}$. In short, the space $T$ has a decomposition $T= \cup_{k,j}Y_j^k$, where each $Y_j^k$ is the product of $X_j^k$ and its corresponding fiber.

\begin{example}
Let $\A$ be given by $\{xy=0\}\subset \C^2$. The only 0-strata is the origin and the complements of the origin in the $x$-axis and $y$-axis are the 1-stratas. Let $H_1=\{x=0\}$. Then the strata in $H_1$ are $X^0 = \{0\}$ and $X^1$ being the complement of the origin in the $y$-axis. $Y^0$ is the complement of $\A$ in a small ball at the origin. $Y^1$ is the complement of $\A$ in a small tube around the $y$-axis. $Y^1$ fibers over $X^1$ with fiber $\C^*$.
\end{example}

\begin{example}
Let $\A$ be a central hyperplane arrangement in $\C^3$. $X^2$ will be the complement of 1-flats in $H_1$. The $X_j^1$'s are the 1-flats on $H_1$, minus the origin. On each $X_j^1$, the fiber is the complement of $\A$ in a small complex 2-dimensional normal disk, which is an affine central line arrangement complement.
\end{example}

If the intersection $Y_{j_1}^{k_1} \cap Y_{j_2}^{k_2}$ is non-empty and $k_2 \geq k_1$, then this intersection is a product space of a submanifold $Z_{j_1,j_2}^{k_1,k_2}$ of $X_{j_2}^{k_2}$ and the same fiber from $Y_{j_2}^{k_2} \rightarrow X_{j_2}^{k_2}$.

\begin{definition}
For each $X_j^k$, the local Alexander module is defined as  $H_l(\mathbb{D}_j^{n+1-k} \setminus \mathbb{D}_j^{n+1-k} \cap \A;i^*\G).$
\end{definition}

Note that all local meridians in $\mathbb{D}_j^{n+1-k} \cap \A$ are mapped to the meridian around $H_1$ via the induced map on fundamental group by inclusion. The induced local system is compatible to each local linking number homomorphisms. As $\G$-modules, $$H_l(\mathbb{D}_j^{n+1-k} \setminus \mathbb{D}_j^{n+1-k} \cap \A;i^*\G)\cong H_l(F_{\text{local}};\C)$$ The local Alexander polynomials $\xi_{k,l}(t)$ are the same as the characteristics polynomials of the local Milnor fiber for $\mathbb{D}_j^{n+1-k} \cap \A$.

\subsection{Divisibility theorem of Alexander Module}

The following theorem is a previous work of the first author in \cite{TT}, rephrased in the context of Alexander modules of hyperplane arrangement. There are three main steps in the proof, where the first two will be recalled here. Please refer to \cite{TT} for details.

\begin{thm}(\cite{TT})
For $0 \leq i \leq n$, the $i$-th global Alexander polynomial $\Delta_i(\A)$ divides the product of the local Alexander polynomials $\xi_{k,l}(t)$ associated to the strata in one of the hyperplanes, say $H_1$, with:
\begin{itemize}
\item $n-i \leq k \leq n$
\item $3n-3k-2i \leq l \leq n-k$
\end{itemize}
\end{thm}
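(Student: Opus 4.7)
The plan follows a three-step reduction modeled on \cite{TT}: reduce from the Alexander module of $M(\A)$ to that of a tubular neighborhood of $H_1$, stratify the neighborhood into pieces with product structure, and then apply the K\"{u}nneth formula on each stratum to isolate the contributions of local Alexander modules. Throughout let $T=T(H_1)\setminus\A$ with its decomposition $T=\bigcup_{k,j}Y_j^k$, where $Y_j^k\cong X_j^k\times(\mathbb{D}_j^{n+1-k}\setminus\mathbb{D}_j^{n+1-k}\cap\A)$.

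\emph{Steps 1 and 2.} First, I would decompose $M(\A)=T\cup U$ with $U$ the complement of a smaller closed sub-tube around $H_1$, and apply Mayer--Vietoris with coefficients in $\G$. The piece $U$ makes no contribution to the Alexander module in degree $i$ (retracting off $H_1$ eliminates the necessary $H_1$-meridian twist, cf.\ Remark~3.2 and \cite{DN}), yielding a surjection $H_i(T;i^*\G)\twoheadrightarrow H_i(M(\A);\G)$; since a surjection of finitely generated torsion $\G$-modules forces divisibility of orders, this produces $\Delta_i(\A)\mid\mathrm{ord}\,H_i(T;i^*\G)$. Second, I would filter $T$ by the dimensions of the strata and exploit the resulting spectral sequence
\begin{equation*}
E_1^{p,q}=\bigoplus_{\text{strata}}H_q(Y_j^k;i^*\G)\Longrightarrow H_{p+q}(T;i^*\G),
\end{equation*}
which exists because the intersections $Y_{j_1}^{k_1}\cap Y_{j_2}^{k_2}$ again admit product fibrations over substrata. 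Multiplicativity of $\G$-module orders along exact sequences then shows that $\mathrm{ord}\,H_i(T;i^*\G)$ divides the product of the orders $\mathrm{ord}\,H_q(Y_j^k;i^*\G)$ over all triples $(k,j,q)$ whose $E_\infty$-subquotient contributes in total degree $i$.

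\emph{Step 3 and degree bounds.} On each $Y_j^k$ the local system restricts trivially to the base $X_j^k$ (loops in $X_j^k$ have vanishing linking number with $\A$), so K\"{u}nneth gives
\begin{equation*}
H_q(Y_j^k;i^*\G)\;\cong\;\bigoplus_{a+l=q}H_a(X_j^k;\C)\otimes_\C H_l(F_{\mathrm{loc},j}^k;\C),
\end{equation*}
whose order is essentially a power of the local Alexander polynomial $\xi_{k,l}(t)$. Applying Andreotti--Frankel to the complex affine variety $X_j^k$ forces $a\leq k$, while $l\leq n-k$ because the local Milnor fiber has complex dimension $n-k$. Tracing these inequalities through the codimension shift of the spectral sequence, together with the total-degree constraint $p+q=i$, yields exactly the stated windows $n-i\leq k\leq n$ and $3n-3k-2i\leq l\leq n-k$. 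The main obstacle will be the bookkeeping in Step 2: one must carefully extract the sharp lower bound $l\geq 3n-3k-2i$ from the codimension filtration and verify that divisibility of $\G$-module orders survives all differentials of the spectral sequence. Step 1 also deserves care, since one must genuinely argue that $U$ contributes nothing to the Alexander module in degree $i$, rather than only producing a small kernel in the Mayer--Vietoris sequence.
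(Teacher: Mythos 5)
Your overall architecture (reduce to a tube $T$ around $H_1$, run a Mayer--Vietoris-type spectral sequence over the stratification, apply K\"unneth on each product piece, and bound degrees by dimensions) matches the paper's Steps 2 and 3 in spirit, but there are two genuine gaps. First, your Step 1 is not how the reduction to $T$ works, and as proposed it would fail: the paper does \emph{not} decompose $M(\A)=T\cup U$ and argue that $U$ is negligible. The open set $U$ (the complement of a smaller tube) still carries the meridians of all the \emph{other} hyperplanes, each of which acts by $t$ in the total linking number local system, so its Alexander modules are not trivial in general and the Mayer--Vietoris sequence would not hand you the surjection $H_i(T;i^*\G)\twoheadrightarrow H_i(M(\A);\G)$ (nor even divisibility without dragging in the unknown orders of $H_*(U)$ and $H_*(T\cap U)$). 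The actual argument is a Lefschetz-type homotopy statement in the style of Dimca--Libgober: $\pi_i(T)\to\pi_i(M(\A))$ is an isomorphism for $i\le n-1$ and onto for $i=n$, hence $\pi_i(M(\A),T)=0$ for $i\le n$, so $M(\A)$ is obtained from $T$ by attaching cells of dimension $n+1$ only; lifting this cell structure to the infinite cyclic covers gives $C_i(T_\infty)\cong C_i(M(\A)_\infty)$ for $i\le n$ and an injection in degree $n+1$, whence the epimorphism on $H_i$ for $i\le n$. This geometric input cannot be replaced by the heuristic that ``retracting off $H_1$ eliminates the meridian twist.''

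Second, the stated windows do not follow from the bookkeeping you describe. The $E^1$ page is indexed by $(p+1)$-fold intersections $\bigcap_{j=0}^{p}Y^{k_{\alpha,j}}_{t_{\alpha,j}}$ with $k_{\alpha,0}<\dots<k_{\alpha,p}$ (your displayed $E_1$ term omits the intersections), and each such intersection is $Z^{k_p}_\alpha\times(\mathbb{D}^{n+1-k_p}\setminus\A)$ where $Z^{k_p}_\alpha$ is only a real $2k_p$-manifold, not a stratum, so Andreotti--Frankel does not apply to it; the paper uses $q-l\le 2k_p$ together with $p\le k_p$ (forced by the strictly increasing $k$'s) to get $l\ge i-3k$. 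Even granting all of that, this only yields the range $i-3k\le l\le n-k$ with $0\le k\le n$, which is strictly weaker than the asserted $n-i\le k\le n$ and $3n-3k-2i\le l\le n-k$ (the two lower bounds on $l$ differ by $3(n-i)$). Closing that gap is the content of the paper's Step 3, which is a \emph{second, separate} application of the Lefschetz hyperplane theorem (passing to a generic linear section to reduce to the top-degree case), not a consequence of the total-degree constraint $p+q=i$. Your proposal does not contain this idea, so as written it would at best prove a weaker divisibility statement.
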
 

\begin{proof} \textbf{Step 1:}

First prove that $H_i(T;i^*\G) \xrightarrow{i_*} H_i(M(\A);\G)$ is a $\G$-module epimorphism.

An Lefschetz hyperplane theorem argument yields the following results, similar as in \cite{RF} $$\pi_i(T) \xrightarrow{i_*} \pi_i(M(\A))$$ is an isomorphism for $i \leq n-1$ and is an epimorphism for $i=n$.It follows that $\pi_i(M(\A),T) =0$ for $i\leq n$. Hence, $M(\A)$ has the homotopy type of $T$ attached with some cells of dimension $n+1$ and the lower skeleton of $M(\A)$ and $T$ are homotopic. Moreover, the same is true for their coverings. As a result, there are isomorphisms of $\G$-modules $$C_i(T_{\infty};\C) \cong C_i(M(\A)_{\infty};\C)$$ for $i\leq n$, and an injection $$C_{n+1}(T_{\infty};\C)\hookrightarrow C_{n+1}(M(\A)_{\infty};\C),$$ which prove the first step.

\textbf{Step 2:}

The main tool to analyze $H_i(T;\G)$ is the homology Mayer-Vietoris spectral sequence (\cite{DB2},\cite{EM}) $$E_{p,q}^1 : \oplus_{\alpha} H_q(\cap_{j=0}^p Y_{t_{\alpha,j}}^{k_{\alpha,j}} ; i^*\G) \Rightarrow H_{p+q}(T;i^*\G),$$ with $k_{\alpha,0}<...<k_{\alpha,p}$.

Recall that  $\cap_{j=0}^p Y_{t_{\alpha,j}}^{k_{\alpha,j}} \simeq Z_{\alpha}^{k_p} \times (\mathbb{D}^{n+1-k_p} \setminus \A)$, where $Z_{\alpha}^{k_p}\subset X_{t_{\alpha,j}}^{k_{\alpha,j}}$ is a submanifold. The fact that $i^*\G|_{Z_{\alpha}^{k_p}}$ is the constant sheaf and the Kunneth formula of homology yield

\begin{align*}
H_q(\cap_{j=0}^p Y_{t_{\alpha,j}}^{k_{\alpha,j}} ; i^*\G) & \cong \oplus_{l=0}^q (H_{q-l}(Z^{2k_p};\C) \otimes H_l(\mathbb{D}^{n+1-k_p} \setminus\A ;i^*\G)) \\
& \cong \oplus_{l=0}^q (H_l(\mathbb{D}^{n+1-k_p}\setminus \A;i^*\G))^{b_{q-l}(Z^{2k_p})} \\
& \cong \oplus_{l=0}^q (H_l(F_{\text{local}};\C))^{b_{q-l}(Z^{2k})}
\end{align*}

From above calculation, we will focus on local homology with $l\leq n-k_p$ because the local Milnor fiber has the homotopy type of a ($n-k$) dimensional CW complex. In addition, we also have $q-l\leq 2k_p$, that is $i-p-l\leq 2k_p$. Since $p+1$ is the number of subspaces of $T$ involved in the calculation, $p\geq k_p$. As a result, the interesting local homology are the in range $i-3k \leq l \leq n-k$.

\textbf{Step 3:}
The final step is another application of the Lefschetz hyperplane theorem, which leads to the desired ranges $n-i \leq k \leq n$ and $3n-3k-2i \leq l \leq n-k$ (for details, please see \cite{TT}).

\end{proof}

\section{On Arrangements in $\C^3$}

In this section, we apply the divisibility theorem to a central hyperplane arrangement $\A=\{H_i\}_{i=1}^d\subset \C^3$.

\begin{thm} Suppose $\A=\{H_i\}_{i=1}^d\subset \C^3$ is a central hyperplane arrangement. Let $F$ be the Milnor fiber of $\A$. For each 1-flat $L$, let $d_L$ denote the number of hyperplanes containing $L$. Then $$d-1\leq b_1(F) \leq d-1 + min_{H_i}(\sum_{L\in H_i}(d_L-2)((d_L,d)-1)),$$ where $(d_L,d)$ is the greatest common factor of $d_L$ and $d$. Moreover, the characteristic polynomial of the Milnor monodromy action on $H_1(F,\C)$ has the form $$(t-1)^{d-1}p(t),$$ where $p(t)$ divides $$gcd_{H_i}(\prod_{L\in H_i} (\frac{t^{(d_L,d)}-1}{t-1})^{d_L-2} ).$$ 
\end{thm}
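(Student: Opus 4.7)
The plan is to apply Theorem 3.4 with $n=2$, $i=1$ to an arbitrary choice of hyperplane $H_{i_0}\in\A$, identify the relevant local Alexander polynomials, and then refine the resulting polynomial divisibility using the fact that the global monodromy has finite order $d$. The lower bound $b_1(F)\geq d-1$ is already recorded in Section 2 from the homology Wang sequence, so I focus on the upper bound and on the shape of the characteristic polynomial.

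Setting $n=2$, $i=1$ in Theorem 3.4, the index constraints $1\leq k\leq 2$ and $4-3k\leq l\leq 2-k$ together with $l\geq 0$ force $(k,l)\in\{(2,0),(1,1)\}$, so the $0$-flat (origin) drops out. The $(k,l)=(2,0)$ stratum is the open part $X^2=H_{i_0}\setminus\bigcup_{L\subset H_{i_0}}L$, whose transversal disk meets only $H_{i_0}$, giving $\xi_{2,0}(t)=t-1$. Each $(k,l)=(1,1)$ stratum is a punctured $1$-flat $X_L^1\subset H_{i_0}$ whose transversal disk meets $d_L$ concurrent lines; the corresponding local Milnor fiber is a Riemann surface of genus $\binom{d_L-1}{2}$ with $d_L$ punctures and a free $\Z_{d_L}$-action. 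A local-system computation on $\mathbb{P}^1\setminus\{d_L\text{ pts}\}$ (where each rank-$1$ system has Euler characteristic $2-d_L$ and vanishing $h^0$ for non-trivial character) shows that every non-trivial character eigenspace of $H_1$ has dimension $d_L-2$, so
$$\xi^L_{1,1}(t)=(t-1)^{d_L-1}\left(\frac{t^{d_L}-1}{t-1}\right)^{d_L-2}.$$
Plugging into Theorem 3.4 yields
$$\Delta_1(t)\ \Big|\ (t-1)\prod_{L\subset H_{i_0}}(t-1)^{d_L-1}\left(\frac{t^{d_L}-1}{t-1}\right)^{d_L-2}.$$
The combinatorial identity $\sum_{L\subset H_{i_0}}(d_L-1)=d-1$, which records that each $H_j\neq H_{i_0}$ meets $H_{i_0}$ in a unique $1$-flat, makes the $(t-1)$-power of the right-hand side equal to $(t-1)^d$. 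Writing $\Delta_1(t)=(t-1)^{d-1}p(t)$ as in Section 2 and matching trivial parts gives
$$p(t)\ \Big|\ \prod_{L\subset H_{i_0}}\left(\frac{t^{d_L}-1}{t-1}\right)^{d_L-2}.$$

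To sharpen $d_L$ to $(d_L,d)$, I use that the global monodromy has order $d$, so every root of $p(t)$ is a non-trivial $d$-th root of unity and $p(t)=\prod_{e\mid d,\,e>1}\Phi_e(t)^{c_e}$. For such $e$, the divisibility $\Phi_e(t)\mid\frac{t^{d_L}-1}{t-1}$ is equivalent to $e\mid d_L$, equivalently to $e\mid(d_L,d)$, so the $\Phi_e$-multiplicity in the right-hand product is unchanged when one replaces $t^{d_L}-1$ by $t^{(d_L,d)}-1$. A cyclotomic-factor-by-cyclotomic-factor comparison thus upgrades the previous divisibility to
$$p(t)\ \Big|\ \prod_{L\subset H_{i_0}}\left(\frac{t^{(d_L,d)}-1}{t-1}\right)^{d_L-2}.$$
Since $H_{i_0}$ was arbitrary, $p(t)$ divides the $\gcd$ of these polynomials as $H_{i_0}$ ranges over $\A$, and taking degrees produces the stated upper bound on $b_1(F)$.

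The delicate step is the local computation of $\xi^L_{1,1}(t)$: proving that every non-trivial character eigenspace has the \emph{same} dimension $d_L-2$, rather than just bounding their sum, is what lets me pull the $d_L-2$ outside the cyclotomic product and, after the $\mu_d$-intersection, obtain the clean factor $\big(\frac{t^{(d_L,d)}-1}{t-1}\big)^{d_L-2}$. Once that formula is established, the remaining work is the book-keeping identity $\sum_{L\subset H_{i_0}}(d_L-1)=d-1$ and the cyclotomic divisibility comparison, both of which are routine.
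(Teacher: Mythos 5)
Your proposal is correct and follows essentially the same route as the paper: the divisibility theorem with $n=2$, $i=1$ restricted to the strata $(k,l)\in\{(2,0),(1,1)\}$ of a fixed hyperplane, the local Alexander polynomial $(t-1)(t^{d_L}-1)^{d_L-2}$ of $d_L$ concurrent lines, and the cyclotomic refinement from $d_L$ to $(d_L,d)$ using that the monodromy has order $d$, followed by the $\gcd$ over all hyperplanes. The only divergence is that you derive the local formula by an equivariant Euler-characteristic computation on $\mathbb{P}^1$ minus $d_L$ points rather than quoting Libgober's formula (Theorem 4.4), and you track the $(t-1)$-powers via $\sum_{L\subset H_{i_0}}(d_L-1)=d-1$ more explicitly than the paper does.
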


The below two corollaries of theorem 4.1 are previous known results by Massey and by Cohen, Dimca, and Orlik, aiming to get combinatorial upper bounds for $b_1(F)$.

\begin{cor}[Massey \cite{DM}]
Suppose $\A=\{H_i\}_{i=1}^d\subset \C^3$ is a central hyperplane arrangement. Let $F$ be the Milnor fiber of $\A$. For each 1-flat $L$, let $d_L$ denote the number of hyperplanes containing $L$. Then $$d-1\leq b_1(F) \leq d-1 + \sum_{L}(d_L-2)((d_L,d)-1)),$$ where $(d_L,d)$ is the greatest common factor of $d_L$ and $d$. Moreover, the characteristic polynomial of the Milnor monodromy action on $H_1(F,\C)$ has the form $$(t-1)^{d-1}p(t),$$ where $p(t)$ divides $$\prod_{L} (\frac{t^{(d_L,d)}-1}{t-1})^{d_L-2} .$$ 
\end{cor}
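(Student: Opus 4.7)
The plan is to derive Corollary 4.2 as an immediate weakening of Theorem 4.1, since the corollary's bound is obtained by replacing a minimum-over-hyperplanes with a sum-over-all-1-flats, which is a strictly coarser estimate. There is nothing new to prove dynamically here; the work is entirely a comparison of combinatorial quantities, and that is also where the (very mild) subtlety lies.

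First I would handle the Betti-number inequality. For every fixed hyperplane $H_i \in \A$, the set of 1-flats contained in $H_i$ is a subset of the set of all 1-flats of $\A$. Since each summand $(d_L-2)((d_L,d)-1)$ is a non-negative integer (as $d_L \geq 2$ and $(d_L,d) \geq 1$), I would conclude
\[
\sum_{L \in H_i}(d_L-2)((d_L,d)-1) \;\leq\; \sum_{L}(d_L-2)((d_L,d)-1),
\]
and in particular the minimum of the left-hand side over $H_i$ is bounded by the right-hand side. Plugging this into the upper bound of Theorem 4.1 yields the desired inequality for $b_1(F)$.

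Next I would handle the divisibility of $p(t)$. By the same subset argument, for each $H_i$ the polynomial $\prod_{L \in H_i}\bigl(\tfrac{t^{(d_L,d)}-1}{t-1}\bigr)^{d_L-2}$ is a divisor of the full product $\prod_{L}\bigl(\tfrac{t^{(d_L,d)}-1}{t-1}\bigr)^{d_L-2}$, since each cyclotomic-type factor in the former also appears in the latter with at least as high an exponent. Consequently the $\gcd$ over all $H_i$ of these expressions still divides the full product. By Theorem 4.1, $p(t)$ divides this $\gcd$, hence divides the full product, which is exactly the claim of the corollary.

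The only potential pitfall is the non-negativity check $d_L \geq 2$ and non-negativity of the summands, needed so that passing to a subset of 1-flats can only shrink (not grow) the sum; this is essentially tautological but worth flagging explicitly. Beyond that, the argument is a one-line monotonicity observation applied twice, once additively to the Betti-number bound and once multiplicatively to the characteristic polynomial.
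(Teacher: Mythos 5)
Your proposal is correct and matches the paper's (implicit) treatment: the paper states Corollary 4.2 without a separate argument precisely because it is the immediate weakening of Theorem 4.1 that you describe, replacing the minimum over hyperplanes by the sum over all 1-flats. Your explicit monotonicity checks (non-negativity of each summand $(d_L-2)((d_L,d)-1)$ for the additive bound, and the divisor relation between the partial and full products for the polynomial bound) are exactly the right justifications.
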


\begin{cor}[Cohen, Dimca, Orlik \cite{CDO}]
For each $H_i\subset \A$ and $k=1,...,d-1$, $$b_1(F)_{\xi^k}\leq \sum_{L\in H_i, (d,d_L)\neq 1}(d_L-2)$$
\end{cor}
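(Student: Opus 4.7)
My plan is to apply the divisibility theorem (Theorem 3.4) in the setting $n = 2$ and $i = 1$, after fixing an arbitrary hyperplane $H \in \A$ to play the role of $H_1$. The prescribed ranges $n-i \leq k \leq n$ and $3n-3k-2i \leq l \leq n-k$ collapse to only two possibilities: $(k,l) = (2,0)$, corresponding to the open $2$-stratum $X^2 = H \setminus \bigcup_{L \subset H} L$, and $(k,l) = (1,1)$, corresponding to the $1$-strata $X_L^1 = L \setminus \{0\}$, one for each $1$-flat $L \subset H$.

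I then compute the local Alexander polynomials. A normal disk to the $2$-stratum meets $\A$ only in $H$, so the local Milnor fiber is a single point and $\xi_{2,0}(t) = t-1$, contributing only to the trivial eigenspace. A normal disk $\mathbb{D}^2 \cong \C^2$ to the $1$-stratum $X_L^1$ meets $\A$ in a central arrangement of $d_L$ concurrent lines, whose Milnor fiber is a smooth affine curve with characteristic polynomial of monodromy
\begin{equation*}
\xi_{1,1,L}(t) \;=\; (t-1)^{d_L-1}\left(\frac{t^{d_L}-1}{t-1}\right)^{d_L-2}.
\end{equation*}
This formula is standard for a central line arrangement: the local Wang sequence gives $d_L - 1$ for the trivial eigenspace, and viewing the local Milnor fiber as an unbranched $\Z/d_L$-cover of $\mathbb{P}^1 \setminus \{d_L \text{ points}\}$ and computing the Euler characteristic of each character local system gives dimension $d_L - 2$ for every non-trivial eigenspace.

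By Theorem 3.4, $\Delta_1(\A)$ divides $\xi_{2,0}(t)\cdot\prod_{L \subset H}\xi_{1,1,L}(t)$. All the $(t-1)$ factors here feed only into the trivial eigenspace, which by Section~2 has dimension exactly $d - 1$. Writing the global monodromy characteristic polynomial as $(t-1)^{d-1}p(t)$, the non-trivial factor therefore satisfies
\begin{equation*}
p(t) \,\Big|\, \prod_{L \subset H}\left(\frac{t^{d_L}-1}{t-1}\right)^{d_L-2}.
\end{equation*}
Because the geometric monodromy has order $d$, every root of $p(t)$ is a non-trivial $d$-th root of unity; combined with the identity $\gcd(t^{d_L}-1,\,t^d-1) = t^{(d_L,d)}-1$, this lets me replace $t^{d_L}$ by $t^{(d_L,d)}$ in the divisor to obtain
\begin{equation*}
p(t) \,\Big|\, \prod_{L \subset H}\left(\frac{t^{(d_L,d)}-1}{t-1}\right)^{d_L-2}.
\end{equation*}
Since $H$ was arbitrary, $p(t)$ divides the $\gcd$ over all $H_i \in \A$; comparing degrees yields the upper bound on $b_1(F)$. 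The lower bound $b_1(F) \geq d-1$ is exactly $b_1(F)_1 = d-1$ from Section~2.

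The main obstacle will be to confirm that Theorem 3.4 delivers the product of the $\xi_{1,1,L}$ with multiplicity one rather than a higher power weighted by Betti numbers of the stratum submanifolds $Z^{2k}$ appearing in the Mayer--Vietoris spectral sequence of Step~2; once this is secured, the remainder is routine cyclotomic $\gcd$ bookkeeping.
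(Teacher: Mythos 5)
Your proposal follows essentially the same route as the paper: Corollary 4.3 is presented there as an immediate consequence of Theorem 4.1, whose proof is exactly your argument --- apply Theorem 3.4 with $n=2$, $i=1$ to reduce to the strata with $(k,l)=(2,0)$ and $(1,1)$, discard the $(t-1)$-torsion, and invoke Libgober's local formula (Theorem 4.4; note $(t-1)(t^{d_L}-1)^{d_L-2}=(t-1)^{d_L-1}\bigl(\tfrac{t^{d_L}-1}{t-1}\bigr)^{d_L-2}$, so your local characteristic polynomial agrees with the paper's). Two loose ends remain. First, the multiplicity-one issue you flag at the end is settled exactly as in Section 4.1--4.2 of the paper: $Y_L^1\simeq X_L^1\times(\C^2\setminus\{x^{d_L}=y^{d_L}\})$ with $X_L^1\simeq\C^*$, so by K\"unneth $H_1(Y_L^1;i^*\G)\cong H_0(X_L^1)\otimes H_1(\text{local fiber};i^*\G)\oplus H_1(X_L^1)\otimes H_0(\text{local fiber};i^*\G)$; the second summand is a copy of $\G/(t-1)$ and hence irrelevant to non-trivial eigenvalues, while the first carries the local $H_1$ with exponent $b_0(X_L^1)=1$. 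Second, your conclusion compares degrees to bound the total $b_1(F)$, but the corollary asserts the eigenspace-wise bound on $b_1(F)_{\xi^k}$; the correct final step is to read off the multiplicity of the single root $\xi^k$ in $\prod_{L\in H_i}\bigl(\tfrac{t^{(d_L,d)}-1}{t-1}\bigr)^{d_L-2}$, namely $\sum(d_L-2)$ taken over those $L$ with $\xi^{k(d_L,d)}=1$, which is at most $\sum_{L\in H_i,\,(d,d_L)\neq 1}(d_L-2)$ because $(d_L,d)=1$ forces $\xi^{k(d_L,d)}\neq 1$ for $1\leq k\leq d-1$. With these two substitutions your argument is complete and coincides with the paper's.
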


An explicit example will be used to illustrate the proof of theorem 4.1, from which the general proof follows. First we state a well known formula which is essential to the proof.

\begin{thm}[\cite{AC}]
Let $C = \{x^d=y^d\} \subset \C^2$ be a line arrangement with $d$ lines all passing through the origin. Then the Alexander polynomial of $\C^2 \setminus C$ is $(t-1)(t^d-1)^{d-2}.$
\end{thm}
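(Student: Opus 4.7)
The plan is to identify the Milnor fiber $F=\{x^d-y^d=1\}$ as a regular $d$-fold cyclic cover of a $d$-punctured $2$-sphere and compute $H_1(F,\C)$ via the eigenspace decomposition under the deck group. Setting $\zeta=e^{2\pi i/d}$, I would consider the projection $p:F\to\mathbb{CP}^1$, $(x,y)\mapsto[x:y]$. Its image is $Y:=\mathbb{CP}^1\setminus\{[\zeta^k:1]\}_{k=0}^{d-1}$, the complement of the $d$ points corresponding to the lines of $C$: above $[u:1]$ the fibre consists of the $d$ solutions of $y^d=(u^d-1)^{-1}$, and above $[1:0]$ of the $d$ points $x^d=1$. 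The Milnor monodromy $(x,y)\mapsto(\zeta x,\zeta y)$ preserves fibres and acts freely and transitively on them, exhibiting $p$ as a regular $\Z_d$-cover whose deck group is generated by the monodromy; a local analysis around each puncture $[\zeta^k:1]$ shows that the classifying character $\pi_1(Y)\to\Z_d$ sends every meridian to a generator.

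Next I would invoke the standard eigenspace decomposition for finite cyclic covers,
$$H_1(F,\C)\cong\bigoplus_{k=0}^{d-1}H_1(Y,L_k),$$
where $L_k$ is the rank-one $\C$-local system on $Y$ whose monodromy around every puncture is $\zeta^k$, and where the $k$-th summand is exactly the $\zeta^k$-eigenspace of the deck action. For $k=0$ the local system is trivial and $H_1(Y,\C)\cong\C^{d-1}$ since $Y$ is homotopy equivalent to a wedge of $d-1$ circles. For $k\neq 0$, the local system $L_k$ is nontrivial, so the coinvariants $H_0(Y,L_k)$ vanish, and combined with the identity $\chi(Y,L_k)=\chi(Y)=2-d$ this forces $\dim H_1(Y,L_k)=d-2$.

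Finally, by Theorem 3.1 the Alexander polynomial of $\C^2\setminus C$ equals the characteristic polynomial of the monodromy on $H_1(F,\C)$, so assembling the eigenspaces yields
$$(t-1)^{d-1}\prod_{k=1}^{d-1}(t-\zeta^k)^{d-2}=(t-1)^{d-1}\left(\frac{t^d-1}{t-1}\right)^{d-2}=(t-1)(t^d-1)^{d-2},$$
as claimed. The one step requiring genuine care is the covering-space identification of the first paragraph — verifying that $p$ really is a cyclic cover and that the meridian monodromy is a generator of $\Z_d$ — after which everything reduces to routine local-system bookkeeping on a punctured sphere.
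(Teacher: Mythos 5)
Your argument is correct, but note that the paper itself offers no proof of this statement: it is quoted as Theorem 4.4 with a citation to Libgober's paper on Alexander polynomials of plane curves and cyclic multiple planes, where it falls out of a much more general theory (Alexander invariants of arbitrary plane curves via cyclic multiple planes and superabundance of linear systems). Your route is a direct, self-contained computation for this special case: you exhibit $F=\{x^d-y^d=1\}$ as the regular $\Z_d$-cover of $\mathbb{CP}^1$ minus the $d$ points cut out by the lines (this is exactly the identification $F/\Z_d\cong\mathbb{P}^1\setminus[\A]$ that Section 2 of the paper records for general central arrangements, specialized to $n=1$), check that each meridian maps to a generator of the deck group, and then read off the eigenspaces from rank-one local systems on a punctured sphere. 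The bookkeeping is right: $b_1(F)_1=d-1$ from the trivial local system, $\dim H_1(Y,L_k)=d-2$ for $k\neq 0$ from $\chi(Y,L_k)=2-d$ together with the vanishing of $H_0$ (coinvariants of a nontrivial character) and of $H_2$ (which you should state explicitly --- it holds because $Y$ is an open surface, homotopy equivalent to a wedge of circles), and the total degree $(d-1)+(d-1)(d-2)=(d-1)^2$ matches the genus count for the Fermat curve minus its $d$ points at infinity. The only conventional ambiguity --- whether the $k$-th summand carries eigenvalue $\zeta^k$ or $\zeta^{-k}$ --- is harmless here since the multiplicity $d-2$ is independent of $k\neq 0$. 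What your approach buys is transparency and independence from the machinery of cyclic multiple planes; what the citation buys is a statement valid far beyond arrangements of concurrent lines.
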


\subsection{Illustrating Example}

Let $f=xyz(x+y+z)$ and $\A=\{f=0\}\subset \C^3$. Denote $H_z=\{z=0\}$. Let $T=T(H_z)\setminus \A$. We have $H_1(T;i^*\G)\twoheadrightarrow H_1(M(\A);\G)$. The Mayer-Vietoris spectral sequence applied to $H_1(T;\G)$ yields $E_{0,1}^1\oplus E_{1,0}^1 \twoheadrightarrow H_1(T;i^*\G)$. The following picture shows all partitions $Y_j^k$ in $T$ and the table shows all the $\G$-modules summands in $E_{0,1}^1$ and $E_{1,0}^1$.

\includegraphics[scale=0.38]{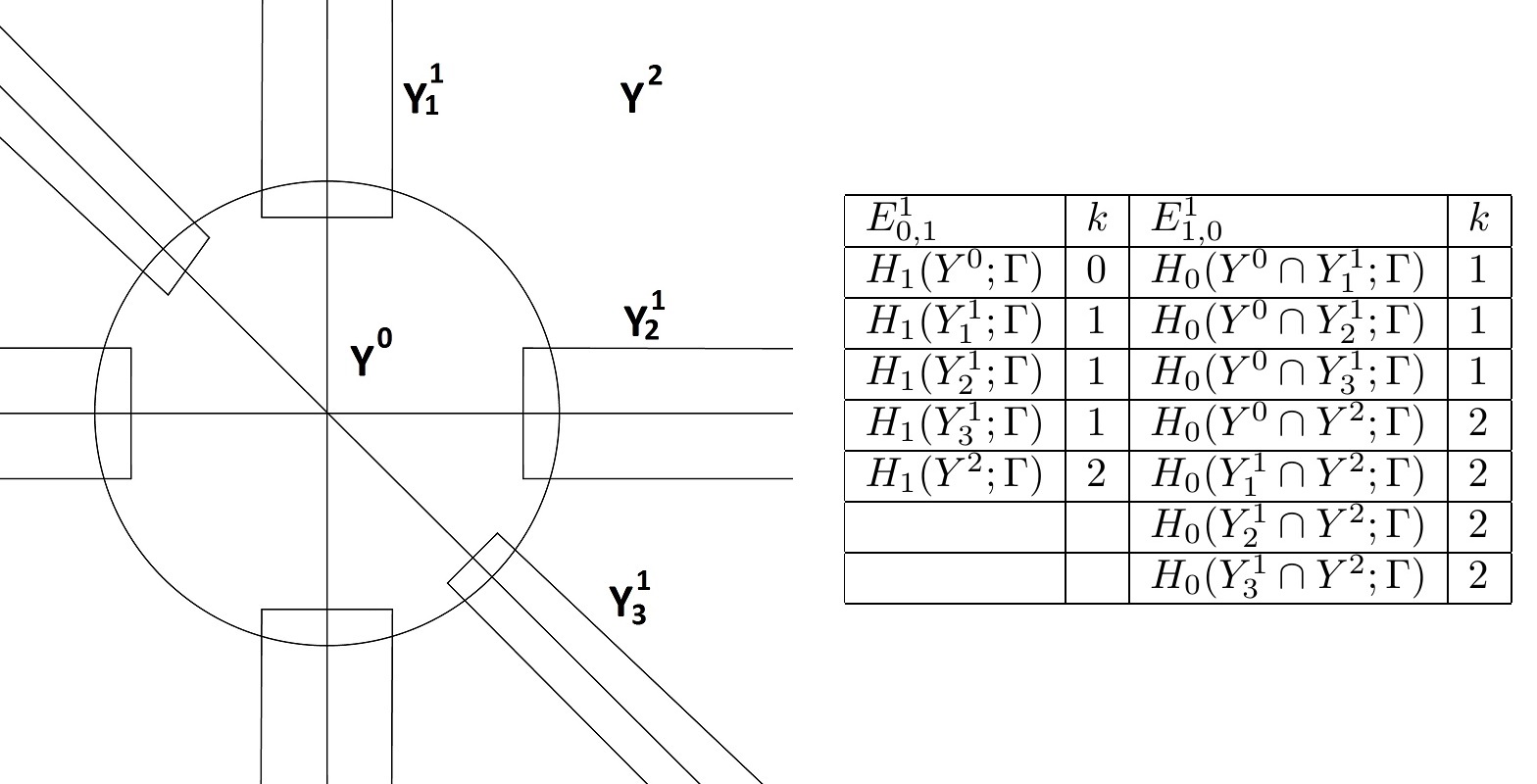}

By the restriction ranges in the divisibility result with $n=2, i=1$, only the summands with $k=2,l=0$ or with $k=1,l=1$ will contribute $H_1(T;i^*\G)$. Moreover, summands with $l=0$ are out of the consideration because they are isomorphic to a finite sum of $\G/(t-1)$, hence essentially only possibly contribute to the eigenspaces associated to the trivial eigenvalues.

Only the three terms in the form $H_1(Y_j^1;\G)$ are considered in the calculation of the upper bound for non-trivial eigenspaces. Each of them corresponds to a single 1-flat in $H_z$. Recall that $Y_j^1 \simeq X_j^1\times \C^2\setminus \{x^{d_j}=y^{d_j}\}$ and, in this case, $X_j^1 \simeq \C^*$. Therefore, $H_1(Y_j^1;i^*\G) \cong H_0(\C^2\setminus \{x^{d_j}=y^{d_j}\};i^*\G) \oplus H_1(\C^2\setminus \{x^{d_j}=y^{d_j}\};i^*\G)$.

Now write the first characteristic polynomial of $F$ as $(t-1)^{d-1}p(t)$, where $p(1)\neq 0$. So, $p(t)$ divides the product of the orders of $H_1(\C^2\setminus \{x^{d_j}=y^{d_j}\};i^*\G)$, which is, by theorem 4.4,  $$\prod_{j=1}^3(t-1)(t^{d_j}-1)^{d_j-2}.$$

Since $p(1)\neq0$ and that the characteristic polynomial of $F$ only has as $d$-roots of unity as zeroes, $p(t)$ divides  
$$\prod_{j=1}^3(\frac{t^{(d,d_j)}-1}{t-1})^{d_j-2}.$$

In this example, $d_1,d_2,d_3=2$, so $p(t)=1$. It is clear that the values $d_1,d_2.d_3$ are not used in the proof. Above argument actually holds in general.

\subsection{The general case}

For general central arrangement in $\C^3$, fix $H_1\subset \A$. Only the terms in the form $H_1(Y_j^1;\G)$ in the Mayer-Vietoris spectral sequence are considered in the calculation of the upper bound for non-trivial eigenspaces. Each  $Y_j^1 \simeq \C^* \times \C^2\setminus \{x^{d_j}=y^{d_j}\}$. Denote the first characteristic polynomial of $F$ by $(t-1)^{d-1}p(t)$, where $p(1)\neq 0$. Then $p(t)$ divides the order of $$\oplus_{L\in H_1} H_1(\C^2\setminus \{x^{d_L}=y^{d_L}\};\G).$$

As a result, $p(t)$ divides $\prod_{L\in H_1}(\frac{t^{(d,d_L)}-1}{t-1})^{d_L-2}.$ Since above is true for all hyperplane component in $\A$, $p(t)$ divides $$gcd_{H_i}(\prod_{L\in H_i}(\frac{t^{(d,d_L)}-1}{t-1})^{d_L-2}).$$

\section{Examples}

We now apply our results on specific examples. We try to detect whether the Milnor fiber monodromy is trivial. Most of the examples provided below have known Betti numbers computed by Fox calculus from the fundamental group of $M(\A)$ (\cite{CS}).

From the upper bound theorem, to obtain non-vanishing eigenspaces associated to non trivial eigenvalues, some local multiplicities (the $d_L$'s) must not be coprime to $d$ in all components. Certainly not all hyperplane arrangement would satisfy this condition.

\begin{example}
Let $\A$ be the $B_3$ arrangement with defining equation $$f=xyz(x-y)(x-z)(y-z)(x+y)(x+z)(y+z)$$
On the component $x=0$, the multiplicities of the 1-flats are 4,4,2,2. Note that $d=|\A|=9$ is coprime to the mattered multiplicities. By the upper bound theorem, $p(t)=1$. Hence, $H_1(F)=H_1(F)_1$ and $b_1(F)=9-1=8$. This result is confirmed by explicit calculations in \cite{CS}.
\end{example}

As suggested in \cite{EM}, non-vanishing eigenspaces with non-trivial eigenvalues only exist when the local multiplicities on different components have a nontrivial common divisor. This combinatorial obstruction can hardly met by many arrangements.

\begin{example}
Let $\A$ be the arrangement with defining equation $$f=xyz(x-y)(x+y)(x-z)$$
On the component $y=0$, the multiplicities of the 1-flats are 4,2,2. 
\newline On the component $z=0$, the multiplicities of the 1-flats are 3,2,2,2.

Then the upper bound theorem implies that $p(t)=1$ and that $h_*$ has no non-trivial eigenvalues $b_1(F)=b_1(F)_1=6-1=5$. As a comparison, Cohen, Dimca, and Orlik upper bound in \cite{CDO} gives that $b_1(F)_{\xi},b_1(F)_{\xi^2}\leq 1$, where $\xi$ is the primitive third root of unity.
\end{example}

The degree $d$ of $\A$ and all component-wise local multiplicities (besides 2) are not coprime in the following three examples. In addition, the local multiplicities are at most 3, one can use superabundance (\cite{EM}) or the combinatorial formula in \cite{3C} to compute them.

\begin{example}
Let $\A$ be the $A_3$ arrangement with defining equation $$f=xyz(x-y)(x-z)(y-z)$$
$b_1(F)=7$ and $b_1(F)_{\xi}=1=b_1(F)_{\xi^2}$, where $\xi$ is the primitive third root of unity. The multiplicities of the 1-flats in all components are 3,3,2, so the global Alexander polynomial of $\A$ divides $$(t-1)^5(\frac{t^3-1}{t-1})^2 = (t-1)^5(t^2-t+1)^2$$ 
Therefore, $b_1(F)\leq 9$ and $b_1(F)_{\xi},b_1(F)_{\xi^2} \leq 2$.
\end{example}

\begin{example}
Let $\A$ be the Pappus configuration $(9_3)_1$ arrangement with defining equation $$f=xyz(x-y)(y-z)(x-y-z)(2x+y+z)(2x+y-z)(-2x+5y-z)$$
$b_1(F)=10$ and $b_1(F)_{\xi}=1=b_1(F)_{\xi^2}$, where $\xi$ is the primitive third root of unity. The multiplicities of the 1-flats in all components is 3,3,3,2,2, so the global Alexander polynomial of $\A$ divides $$(t-1)^5(\frac{t^3-1}{t-1})^2 = (t-1)^5(t^2-t+1)^3$$  Hence, $b_1(F)\leq 14$ and $b_1(F)_{\xi},b_1(F)_{\xi^2} \leq 3$.
\end{example}

\begin{example}
Let $\A$ be the Pappus configuration $(9_3)_2$ arrangement with defining equation $$f=xyz(x+y)(y+z)(x+3Z)(x+2y+z)(x+2y+3z)(2x+3y+3z)$$
This is a counterexample showing that the upper bound theorem does not always detect trivial mondromy.
It is known that $b_1(F)=8$, and hence the mondromy is trivial. However, the multiplicities of the 1-flats in all components is 3,3,3,2,2. The upper bound theorem implies that the Alexander polynomial of $\A$ divides $$(t-1)^5(\frac{t^3-1}{t-1})^2 = (t-1)^5(t^2-t+1)^3,$$ which suggests possible room for non-trivial eigenspaces. 
\end{example}

Note that in the last three examples, since the multiplicities of the 1-flats do not change across hyperplane components, Cohen, Dimca, and Orlik upper bound gives the same result.

\section{On Milnor Fibers with Multiplicities}

We can extend the upper bound formula to the case of Milnor fiber with multiplicities for central arrangements in $\C^3$.

\begin{definition}
Let $\A=\{H_i\}_{i=1}^r\subset \C^{n+1}$ be a central hyperplane arrangement. Denote the defining equation of $H_i$ by $f_i$. Let $m_1,m_2,...,m_r$ be positive integers. Then $(\A,m)$ is called a multi-arrangement with defining equation $Q_m = \prod_{i=1}^r f_i^{m_i}.$ The corresponding Milnor fiber $F_m = Q_m^{-1}(1)$ is called the Milnor fiber with multiplicities $m= (m_1,...,m_r).$

\end{definition} 

\begin{prop}[cf \cite{TT}]
Let $(\A,m)$ be a multi-arrangement in $\C^{n+1}$. Define $$\varepsilon_m: \pi_1(M(\A)) \xrightarrow{\text{ab}} H_1(M(\A);\Z)\rightarrow \Z,$$ sending the meridian about $H_i$ to $m_i$. Then $\varepsilon_m$ defines a local coefficients $\G_m:=\C[t^{\pm 1}]$; and as $\G_m$-modules, $$H_*(M(\A);\G_m) \cong H_*(F_m;\C).$$
\end{prop}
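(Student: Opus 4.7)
\medskip

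The plan is to mimic the argument reviewed in Section 3.1, which handled the case $m_i=1$ for all $i$. The key observation is that $Q_m:M(\A)\to\C^*$ is itself a globally defined Milnor-type fibration: since $Q_m=\prod f_i^{m_i}$ is a homogeneous polynomial (of degree $\sum m_i$) that is nowhere zero on $M(\A)$, the standard argument of Milnor yields that $Q_m:M(\A)\to\C^*$ is a locally trivial smooth fibration with fiber $F_m=Q_m^{-1}(1)$. So the first step is simply to record this fibration for $(\A,m)$.

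Next I would compute the induced map $(Q_m)_*:\pi_1(M(\A))\to\pi_1(\C^*)=\Z$ on a generating set. A small meridian around the hyperplane $H_i$, when followed by $Q_m$, winds around $0\in\C$ exactly $m_i$ times, because near $H_i$ one can locally write $Q_m=u\cdot f_i^{m_i}$ with $u$ a non-vanishing holomorphic unit. Hence $(Q_m)_*$ sends the $i$-th meridian to $m_i$, so $(Q_m)_*$ factors as the abelianization followed by the linear map sending the $i$-th generator of $H_1(M(\A);\Z)\cong\Z^r$ to $m_i$; that is, $(Q_m)_*=\varepsilon_m$.

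Now I form the pullback $\widetilde{M}$ of the universal covering $\exp:\C\to\C^*$ along $Q_m$, i.e.\
\[
\widetilde{M}\;=\;\{(p,z)\in M(\A)\times\C\;:\;Q_m(p)=e^{2\pi i z}\}.
\]
By construction, $\widetilde{M}\to M(\A)$ is the regular infinite cyclic covering classified by the kernel of $(Q_m)_*=\varepsilon_m$. The second projection $\widetilde{M}\to\C$ is a locally trivial fiber bundle with fiber $F_m$, and since $\C$ is contractible this bundle is trivializable up to homotopy, giving a homotopy equivalence $\widetilde{M}\simeq F_m$. Moreover, the generator of the deck group $\Z$ acts on $\widetilde{M}$ by $(p,z)\mapsto(p,z+1)$, and under the equivalence $\widetilde{M}\simeq F_m$ this action is intertwined with the geometric monodromy automorphism of the fibration $Q_m$.

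The final step is the usual identification of homology with local coefficients with homology of the associated cover: as $\G_m$-modules,
\[
H_*(M(\A);\G_m)\;\cong\;H_*(\widetilde{M};\C)\;\cong\;H_*(F_m;\C),
\]
with the $\G_m$-module structure on the right given by the deck transformation (equivalently, monodromy) action. I expect the only genuinely technical points to be (i) verifying that $(Q_m)_*$ does map each meridian to $m_i$ with the correct sign/orientation convention, and (ii) checking that the deck action matches the geometric monodromy so that the identification above is $\G_m$-linear and not just $\C$-linear; both are standard and parallel the $m_i=1$ case treated in \cite{HS} and \cite{TT}.
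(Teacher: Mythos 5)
Your proposal is correct and follows essentially the same route the paper takes: the paper states this proposition without proof, citing \cite{TT}, and the argument it relies on is exactly the one you give, namely the identification (already used in Section 3.1 for the reduced case $m_i=1$) of $H_*(M(\A);\G_m)$ with the homology of the infinite cyclic cover pulled back from $\exp:\C\to\C^*$ along the global Milnor fibration of the homogeneous polynomial $Q_m$, whose fiber over a contractible base is $F_m$. The only point worth a sentence of care is the case $\gcd(m_1,\dots,m_r)=e>1$, where $\varepsilon_m$ is not surjective and both $F_m$ and the pullback $\widetilde{M}$ have $e$ components; your pullback description (rather than the ``cover classified by $\ker\varepsilon_m$'' phrasing) is the correct object there and the isomorphism still holds.
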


\begin{remark}
The notion of $H_*(M(\A);\G_m)$ is a special case of twisted Alexander modules, which are studied intensively in \cite{TT}.
\end{remark}

\begin{thm}(theorem 2.8 in \cite{TT})
Let $(\A,m)$ be a multi-arrangement in $\C^2$. The first characteristic polynomial of $F_m$ is $gcd\{t^{m_i}-1\}_{i=1}^r \cdot (t^{m_1+...+m_r}-1)^{r-2}.$
\end{thm}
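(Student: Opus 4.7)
The plan is to realize $F_m$ as a regular $\Z_N$-cover of the projective complement $U:=\mathbb{P}^1\setminus[\A]$ (the complement of the $r$ points obtained by projectivizing $\A$), where $N=m_1+\cdots+m_r$, and then split the monodromy on $H_1(F_m;\C)$ into isotypic components indexed by the $N$-th roots of unity.

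First I would observe that the Hopf map restricts to an unramified $N$-fold covering $F_m\to U$: over $[a:b]\in U$ the preimage in $\C^2\setminus\A$ is the line $\{(ta,tb):t\in\C^*\}$, on which $\prod f_i^{m_i}=1$ reduces to $t^N\cdot\prod f_i(a,b)^{m_i}=1$ with exactly $N$ solutions for $t$. The geometric monodromy $h$ generates the deck $\Z_N$-action, and the cover is classified by $\rho:\pi_1(U)\to\Z_N$, $\gamma_i^U\mapsto m_i\bmod N$. When $g:=\gcd(m_1,\ldots,m_r)>1$ the cover is disconnected with $g$ components, but the isotypic formalism below still applies to a disconnected free abelian Galois cover. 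For every $N$-th root of unity $\zeta$, standard covering-space theory then gives
\begin{equation*}
H_1(F_m;\C)_\zeta \;\cong\; H_1(U;\Ls_\zeta),
\end{equation*}
where $\Ls_\zeta$ is the rank-one $\C$-local system on $U$ with character $\gamma_i^U\mapsto\zeta^{m_i}$.

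Next I would compute $\dim H_1(U;\Ls_\zeta)$ using that $U$ is homotopy equivalent to a wedge of $r-1$ circles. If $\Ls_\zeta$ is trivial --- equivalently $\zeta^{m_i}=1$ for every $i$, equivalently $\zeta^g=1$ --- then $\dim H_1=r-1$. Otherwise $H_0(U;\Ls_\zeta)=0$ because the monodromy is nontrivial on a connected space, and $\chi(U)=2-r$ forces $\dim H_1=r-2$. Exactly $g$ of the $N$-th roots of unity satisfy $\zeta^g=1$, so assembling gives
\begin{equation*}
\prod_{\zeta^N=1}(t-\zeta)^{\dim H_1(U;\Ls_\zeta)} \;=\; \Big(\prod_{\zeta^N=1}(t-\zeta)\Big)^{r-2}\prod_{\zeta^g=1}(t-\zeta) \;=\; (t^N-1)^{r-2}(t^g-1),
\end{equation*}
and $t^g-1=\gcd\{t^{m_i}-1\}_{i=1}^r$ is the claimed form.

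The main obstacle I anticipate is bookkeeping in the disconnected case $g>1$: one must either apply the isotypic decomposition $H_*(X;\C)\cong\bigoplus_\chi H_*(Y;\Ls_\chi)$ for a free finite abelian Galois cover $X\to Y$ without assuming $X$ connected, or treat each of the $g$ components individually as a connected $\Z_{N/g}$-cover of $U$ and sum. Either way the Euler-characteristic count survives and the formula is unaffected.
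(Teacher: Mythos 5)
Your argument is correct, and it supplies something the paper itself does not: Theorem 6.4 is quoted from \cite{TT} (Theorem 2.8 there) with no proof given here, and the surrounding setup (Proposition 6.2, the identification $H_*(M(\A);\G_m)\cong H_*(F_m;\C)$) indicates that the source establishes it by computing the twisted Alexander module of the central line arrangement complement directly, e.g.\ via the infinite cyclic cover of $M(\A)\simeq \C^*\times(\C\setminus\{r-1\ \text{points}\})$ or Fox calculus on $\pi_1=\Z\times F_{r-1}$. Your route is genuinely different and more geometric: you restrict the Hopf map to exhibit $F_m$ as the principal $\Z_N$-cover of $U=\mathbb{P}^1\setminus[\A]$ classified by $\gamma_i\mapsto m_i \bmod N$ (well defined since $\sum m_i\equiv 0 \bmod N$), decompose $H_1(F_m;\C)$ into isotypic pieces $H_1(U;\Ls_\zeta)$, and read off each dimension from $\chi(U)=2-r$ together with the vanishing of $H_0(U;\Ls_\zeta)$ for nontrivial $\Ls_\zeta$; the count $g(r-1)+(N-g)(r-2)$ assembles to $(t^g-1)(t^N-1)^{r-2}$ with $t^g-1=\gcd\{t^{m_i}-1\}$, exactly as claimed, and it correctly recovers Theorem 4.4 when all $m_i=1$. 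The two details you flag are the right ones to worry about and are handled correctly: the isotypic decomposition $H_*(X;\C)\cong\oplus_\chi H_*(Y;\Ls_\chi)$ for a free $\Z_N$-action needs no connectivity hypothesis on $X$ (it follows from $C_*(X;\C)\cong C_*(\widetilde Y)\otimes_{\Z\pi_1(Y)}\C[\Z_N]$ and $\C[\Z_N]=\oplus_\chi\C_\chi$), and the ambiguity between $\chi(h)=\zeta$ versus $\zeta^{-1}$ is immaterial since the final product is invariant under $\zeta\mapsto\zeta^{-1}$. What your approach buys is a short, self-contained proof independent of \cite{TT}; what the Alexander-module route buys is uniformity with the divisibility machinery of Section 3, where these local polynomials are consumed as orders of $\G_m$-modules rather than as characteristic polynomials of a finite-order automorphism. (Only cosmetic caveat: for $r=1$ the stated formula has a negative exponent; your computation still gives the correct answer $H_1=0$ there, so this is an edge case of the statement, not of your proof.)
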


\begin{thm}
Let $(\A,m)$ be a multi-arrangement in $\C^3$. Let $d_L$ be the number of hyperplane in $\A$ that contains the 1-flat $L$. Denote the number of 1-flats in $H_i$ by $p_i$. The first characteristic polynomial of $F_m$ divides $$gcd_{i=1,...,r}[(t^{m_i}-1)^{2p_i+1}\cdot\prod_{L\in H_i}(\gcd\{t^m-1\}_{m\in L} \cdot (t^{\sum_{m\in L}m}-1)^{d_L-2})].$$
\end{thm}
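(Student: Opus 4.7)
The plan is to adapt the argument of Theorem~4.1 to the multi-arrangement setting, using Proposition~6.2 to identify $H_1(F_m;\C)$ with the twisted Alexander module $H_1(M(\A);\G_m)$, and using Theorem~6.4 as the local input at each 1-flat. Since the proof of the divisibility theorem (Theorem~3.4) is purely topological---a Lefschetz hyperplane argument combined with a Mayer--Vietoris spectral sequence---and does not depend on the specific linking-number homomorphism, I would first observe that it carries over verbatim to yield a $\G_m$-module epimorphism $H_1(T;i^*\G_m)\twoheadrightarrow H_1(M(\A);\G_m)$, where $T=T(H_i)\setminus\A$ is the punctured tubular neighborhood of a fixed component $H_i\subset\A$.

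Next, I would run the Mayer--Vietoris spectral sequence on the stratification of $T$ by $Y^2$, the $\{Y_j^1\}_{L\subset H_i}$, and $Y^0$, together with their intersections. Specializing the range restriction of Theorem~3.4 to $n=2$, $i=1$ leaves only the summand types $(k_p,l)=(2,0)$ and $(k_p,l)=(1,1)$. The $(k_p,l)=(1,1)$ summands appear on the $E^1_{0,1}$ line: each $Y_j^1\simeq \C^*\times(\C^2\setminus\A_L)$, where $\A_L$ is the transversal multi-arrangement inheriting multiplicities from $\A$, contributes $H_0(\C^*;\C)\otimes H_1(\C^2\setminus\A_L;i^*\G_m)$, whose order divides $\gcd\{t^m-1\}_{m\in L}\cdot(t^{\sum_{m\in L}m}-1)^{d_L-2}$ by Theorem~6.4.

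The $(k_p,l)=(2,0)$ summands appear in two places. On the $E^1_{0,1}$ line, $Y^2\simeq X^2\times\C^*$ with trivial monodromy along $X^2\subset H_i$ and monodromy $t^{m_i}$ along the transversal $\C^*$, so the Kunneth formula gives $H_1(Y^2;i^*\G_m)\cong H_1(X^2;\C)\otimes\G_m/(t^{m_i}-1)$, of order $(t^{m_i}-1)^{p_i}$. On the $E^1_{1,0}$ line, the intersections with $k_p=2$ are $Y^0\cap Y^2$ together with the $p_i$ pieces $Y_j^1\cap Y^2$; each is a connected submanifold of $X^2$ times the $\C^*$-fiber of $Y^2\to X^2$, so its $H_0$ is one copy of $\G_m/(t^{m_i}-1)$, contributing $(t^{m_i}-1)^{p_i+1}$ in total. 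Multiplying the three orders produces the stated divisor for $H_i$, and taking the gcd over $i$ gives the theorem.

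The main subtlety is the bookkeeping of the $l=0$ contributions. In Theorem~4.1 they were discarded because they yield only factors of $t-1$, which are absorbed into the known $(t-1)^{d-1}$ piece of the characteristic polynomial. In the weighted setting the analogous factor is $t^{m_i}-1$, whose roots can be genuine eigenvalues of the monodromy; consequently both the top-stratum $Y^2$ summand and the $E^1_{1,0}$ intersections must be tracked explicitly. In particular, the easily overlooked intersection $Y^0\cap Y^2$ of the origin-ball with the $H_i$-tube is exactly what produces the final $+1$ in the exponent $2p_i+1$.
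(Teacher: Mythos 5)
Your proposal is correct and follows essentially the same route as the paper: the $\G_m$-coefficient version of the divisibility theorem, the Mayer--Vietoris spectral sequence on the tube around $H_i$, Theorem~6.4 for the $(k,l)=(1,1)$ local terms, and explicit tracking of the $(t^{m_i}-1)$ factors from $H_1(Y^2)$, the $p_i$ intersections $Y_j^1\cap Y^2$, and $Y^0\cap Y^2$ to obtain the exponent $2p_i+1$. Your closing remark about why the $l=0$ terms can no longer be discarded in the weighted setting is exactly the point the paper's proof makes implicitly by listing those summands in its $E^1$-page bookkeeping.
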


\begin{proof}

The divisibility theorem holds for Alexander modules with $\G_m$-coefficients. In particular, $$H_1(T(H_1)\setminus \A;\G_m) \twoheadrightarrow H_1(M(\A);\G_m)$$ and the Mayer-Vietoris spectral sequence with $\G_m$-coefficients implies that  
$E_{0,1}^1\oplus E_{1,0}^1 \twoheadrightarrow H_1(T;i^*\G_m)$. By the divisibility result, $H_1(M(\A);i^*\G_m)$ is bounded above by the direct sum of local modules from $H_1$, with $(k,l) = (2,0),(1,1)$. Let us study the summands in the $E^1$ page of the corresponding spectral sequence:

\begin{itemize}
\item $H_0(Y_0\cap Y_j^1;\G_m)$ : not considered because $k=1,l=0$
\item \textcolor{red}{$H_0(Y_0\cap Y^2;\G_m)\cong \G/(t^{m_1}-1)$}
\item \textcolor{red}{$H_0(Y_j^1\cap Y^2;\G_m)\cong \G/(t^{m_1}-1)$}
\item $H_1(Y_0;\G_m)$ : not considered because $k=0$
\item \textcolor{red}{$H_1(Y_j^1;\G_m)$}
\item \textcolor{red}{$H_1(Y^2;\G_m)$}
\end{itemize}

The proof is completed by the following two computations. By theorem 6.4, the order of $H_1(Y_j^1;\G_m)$ is $$\gcd\{t^m-1\}_{m\in L} \cdot (t^{\sum_{m\in L}m}-1)^{d_L-2}),$$ where $L$ is the 1-flat sitting in $Y_j^1$. On the other hand,
\begin{align*}
H_1(Y^2;\G_m) & \cong (H_0(X^2;\C)\otimes H_1(\C^*;\G_m))\oplus (H_1(X^2;\C)\otimes H_0(\C^*;\G_m)) \\
& \cong 0 \oplus (\C^{b_1(X^2)}\otimes \G/(t^{m_1}-1))\\
& \cong (\G/(t^{m_1}-1))^{p_1}
\end{align*}

\end{proof}

\section{On Arrangements in $\C^4$}

In this section, we demonstrate such a Mayer-Vietoris argument shown in this paper can be applied to obtain combinatorial upper bound for the second Betti numbers of the Milnor fiber of central hyperplane arrangement in $\C^4$. In principle, bounds for higher Betti numbers can obtained through similar way.

\begin{prop} Suppose $\A=\{H_i\}_{i=1}^d\subset \C^4$ is a central hyperplane arrangement. For each flat $L$ of rank 1, denote $d_L$ be the number of hyperplanes which contain $L$ and $p_L$ be the number of 2-flats which are contained in $L$.  Let $F$ be the Milnor fiber of $\A$.  Then $$\chi(F) = 4d-3d^2+d\cdot\text{\# of 2-flats} + d\sum_{L}(3d_L-d_L^2+p_L-2)$$
\end{prop}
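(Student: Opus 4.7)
The plan is to reduce the computation of $\chi(F)$ to an Euler-characteristic calculation on the projective complement $\mathbb{P}^3\setminus[\A]$ and then stratify by flats, handling each hyperplane stratum by Proposition~2.1 applied one dimension down.

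First I would use the free $\Z_d$-action on $F$ generated by the geometric monodromy, whose quotient is $F/\Z_d\simeq\mathbb{P}^3\setminus[\A]$ (the direct analogue in $\C^4$ of the identification recalled in Section~2). This gives
\begin{align*}
\chi(F) \;=\; d\cdot\chi(\mathbb{P}^3\setminus[\A]) \;=\; d\bigl(\chi(\mathbb{P}^3) - \chi([\A])\bigr) \;=\; d\bigl(4 - \chi([\A])\bigr),
\end{align*}
so the task reduces to evaluating $\chi([\A])$ combinatorially in terms of the flats.

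Next I would stratify the union of projective hyperplanes $[\A]\subset\mathbb{P}^3$ by the open strata attached to the proper flats of $\A$: each hyperplane $H_i$ contributes $\mathring{H_i}=[H_i]\setminus\bigcup_{P\subsetneq H_i}[P]$, each 2-flat $P$ contributes $\mathring{P}=[P]\setminus\bigcup_{L\subsetneq P}[L]$, and each 1-flat $L$ contributes the single point $[L]$. Additivity of Euler characteristic in the complex algebraic setting gives
\begin{align*}
\chi([\A]) \;=\; \sum_{i=1}^{d}\chi(\mathring{H_i}) \;+\; \sum_{P}(2 - n_P) \;+\; \#\{\text{1-flats}\},
\end{align*}
where $n_P$ is the number of 1-flats contained in $P$. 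Each stratum $\mathring{H_i}$ is exactly the projective complement in $[H_i]\cong\mathbb{P}^2$ of the induced central line arrangement whose lines are the 2-flats $P\subset H_i$, so Proposition~2.1 applies directly and expresses $\chi(\mathring{H_i})$ in terms of the number $a_i$ of 2-flats in $H_i$ and the local multiplicities $d_L^{(i)}:=\#\{P:L\subset P\subset H_i\}$ at each 1-flat $L\subset H_i$.

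Finally, I would assemble the pieces and convert all sums indexed by 2-flats into sums indexed by 1-flats via the standard double-counting identities $\sum_i a_i = \sum_P d_P$, $\sum_i\sum_{L\subset H_i}1 = \sum_L d_L$, $\sum_P n_P = \sum_L p_L$, and $\sum_i\sum_{L\subset H_i}d_L^{(i)} = \sum_L\sum_{P\supset L}d_P$; the last of these is where the invariant $p_L$ (the number of 2-flats passing through $L$) enters, since for fixed $L$ the inner sum is exactly a sum of $d_P$ over the $p_L$ two-flats through $L$. After collecting terms and multiplying through by $d$, the result rearranges into the claimed closed form $4d - 3d^2 + d\cdot\#\{\text{2-flats}\} + d\sum_L(3d_L - d_L^2 + p_L - 2)$.

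The main obstacle is this last bookkeeping step: the stratification naturally produces an answer of the form $d\sum_P(2 - n_P)(d_P - 1) + d\sum_L(d_L - 1)$ up to the explicit $4d - 3d^2$ contributed by $\chi(\mathbb{P}^3)$ and $\sum_i 3$, whereas the target formula exhibits only a single global $\#\{\text{2-flats}\}$ contribution together with a polynomial expression in $d_L$ and $p_L$ summed over 1-flats. Showing that these two forms agree requires carefully tracking which sum-swap eliminates which 2-flat term, and matching the exact coefficients (in particular, the precise polynomial $3d_L - d_L^2 + p_L - 2$) is the most delicate part; one can view this as a second, local application of Proposition~2.1 at each 1-flat $L$ to the central $\C^3$-arrangement $\A_L$ obtained by projecting the $d_L$ hyperplanes through $L$ modulo $L$, whose 1-flats are the $p_L$ 2-flats of $\A$ through $L$.
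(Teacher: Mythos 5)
Your plan is the natural one (and, since the paper states Proposition~7.1 without proof, presumably the intended one): use the free $\Z_d$-action to get $\chi(F)=d\,\chi(\mathbb{P}^3\setminus[\A])$, stratify $[\A]$ by open strata of flats, apply Proposition~2.1 to each $\mathring{H_i}$, and double-count. Carried out, this gives exactly the intermediate expression you name, namely
$$\chi(F)=d\Bigl(4-3d+\sum_{P}(d_P-1)(2-n_P)+\sum_{L}(d_L-1)\Bigr),$$
where $P$ runs over the $2$-dimensional flats ($d_P$ hyperplanes through $P$, $n_P$ one-dimensional flats inside $P$) and $L$ over the $1$-dimensional flats. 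Up to this point your argument is sound, and all the double-counting identities you list ($\sum_i a_i=\sum_P d_P$, $\sum_i\sum_{L\subset H_i}d_L^{(i)}=\sum_L\sum_{P\supset L}d_P=\sum_P n_P d_P$, etc.) are correct.

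The gap is precisely the step you defer as ``the most delicate part'': the claimed rearrangement into $4d-3d^2+d\cdot\#\{\text{2-flats}\}+d\sum_L(3d_L-d_L^2+p_L-2)$ cannot be carried out, because the correct expression above genuinely depends on the multiplicities $d_P$ of the $2$-flats, and these are not recoverable from the number of $2$-flats together with the data $(d_L,p_L)$ attached to $1$-flats (the only available identity, $\sum_{P\supset L}\binom{d_P}{2}=\binom{d_L}{2}$, controls second powers of $d_P$, not the first powers that occur here). In fact the stated formula fails already for the Boolean arrangement $xyzw=0$: there $F\cong(\C^*)^3$, so $\chi(F)=0$, and your intermediate formula gives $4\bigl(4-12+0+4\cdot 2\bigr)=0$; but the proposition's right-hand side, with $d=4$, six $2$-flats, and four $1$-flats each having $d_L=3$ and $p_L=3$, evaluates to $16-48+24+4\cdot 4\cdot(9-9+3-2)=8$ (other readings of ``$p_L$'' and ``\# of 2-flats'' give $40$ or $32$, never $0$). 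So the obstacle is not bookkeeping: the proposition as printed is incorrect, and the honest conclusion of your argument is the displayed formula above, not the one in the statement. Your closing suggestion of a second local application of Proposition~2.1 at each $1$-flat does not repair this, since it reintroduces $\sum_{P\supset L}(d_P-1)$, which is again not a function of $(d_L,p_L)$.
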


\begin{thm} Suppose $\A=\{H_i\}_{i=1}^d\subset \C^4$ is a central hyperplane arrangement. Let $F$ be the Milnor fiber of $\A$. Then $b_3(F) \leq 2-d-\chi(F)+b_2(F)$. For each 1-flat $L$, denote $d_L$ be the number of hyperplanes which contain $L$ and $p_L$ be the number of 2-flats which are contained in $L$.  $$b_2(F) \leq b_2(\C^4\setminus \A) + min_{H_i}(\sum_{L\in H_i}(B_L+(2p_L+1)(d_L-2)((d,d_L)-1)),$$ where $B_L$ is the sum of upper bounds of the second Betti numbers of the Milnor fiber of local central hyperplane arrangements for the 1-flats in $L$. 
\end{thm}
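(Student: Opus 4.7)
The plan is to split the argument into the two displayed bounds. The inequality on $b_3(F)$ follows formally from the Euler-characteristic identity for $F$ together with a Wang-sequence estimate on $b_1$: since $F$ is a smooth affine hypersurface in $\C^4$, it has the homotopy type of a CW complex of real dimension at most $3$, so $b_j(F)=0$ for $j>3$ and $\chi(F)=1-b_1(F)+b_2(F)-b_3(F)$. Rerunning the Wang exact sequence argument of Section 2 in ambient dimension $4$ yields $b_1(F)_{1}=\dim H_1(M(\A);\C)-1=d-1$, hence $b_1(F)\ge d-1$, and substituting gives $b_3(F)\le 2-d-\chi(F)+b_2(F)$.

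For $b_2(F)$, I would decompose $b_2(F)=b_2(F)_{1}+\sum_{k\ne 0}b_2(F)_{\xi^k}$ using semisimplicity of $h_*$. The trivial eigenspace is controlled by the Wang sequence one degree higher: the piece $\mathrm{coker}(h_*-I)|_{H_2(F)}\hookrightarrow H_2(M(\A);\C)$, combined with $\dim\ker(h_*-I)=\dim\mathrm{coker}(h_*-I)$ for a diagonalizable operator, yields $b_2(F)_{1}\le b_2(\C^4\setminus\A)$. For the non-trivial eigenspaces I would apply the divisibility Theorem 3.3 with $n=3$ and $i=2$: the admissible pairs collapse to $(k,l)\in\{(1,2),(2,0),(2,1),(3,0)\}$, and summands with $l=0$ reduce to copies of $\G/(t-1)$ contributing only to the trivial eigenvalue, which is already absorbed into $b_2(\C^4\setminus\A)$.

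Fixing a reference hyperplane $H_1$, the remaining contributions in the Mayer--Vietoris spectral sequence of $T(H_1)\setminus\A$ come from the $(1,2)$-term at each 1-flat $L\subset H_1$ and the $(2,1)$-term at each 2-flat $P\subset H_1$. The former is the second homology of the Milnor fiber of the central arrangement of $d_L$ hyperplanes in the transversal $\C^3$ at $L$, which I would bound inductively by $B_L$ via Proposition 2.2 together with Theorem 4.1 applied to this local $\C^3$ arrangement. The latter is the first homology of the Milnor fiber of $d_P$ concurrent lines in the transversal $\C^2$, whose non-trivial part has degree $(d_P-2)((d,d_P)-1)$ by Theorem 4.4, entering the spectral sequence tensored with the K\"unneth cohomology of the base stratum $X_P^2$. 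Reorganizing these 2-flat terms by the 1-flats they contain and combining with a transversal K\"unneth factor from $L$ itself produces the coefficient $(2p_L+1)$ in front of $(d_L-2)((d,d_L)-1)$; summing over $L\subset H_1$ and minimizing over $H_i\in\A$ then gives the stated bound.

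The hardest step will be the combinatorial bookkeeping in this repackaging: I would have to verify carefully that the K\"unneth factors contributed by the various 2-flats, once grouped by the 1-flats they contain and merged with the transversal factor at $L$, combine exactly into $(2p_L+1)(d_L-2)((d,d_L)-1)$, and that the trivial eigenspace parts of each local Alexander polynomial are absorbed into $b_2(\C^4\setminus\A)$ without being double counted. A secondary subtlety is that $B_L$ is only an inductive bound on the total local $b_2$ at $L$; I would need to check that using it does not spoil the eigenvalue-by-eigenvalue divisibility that underlies the control of the non-trivial eigenspaces.
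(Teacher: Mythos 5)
Your proposal follows essentially the same route as the paper: the $b_3$ bound from $\chi(F)$ together with $b_1(F)\geq d-1$ via the Wang sequence, the trivial eigenspace of $H_2(F)$ bounded by $b_2(M(\A))$ via the Wang sequence one degree up, and the non-trivial eigenspaces controlled by the divisibility theorem with $(k,l)\in\{(3,0),(2,0),(2,1),(1,2)\}$ fed into the Mayer--Vietoris spectral sequence of $T(H_1)\setminus\A$, with Theorem 4.4 at the codimension-two flats and the inductive bound $B_L$ at the codimension-three flats. The one step you defer as ``bookkeeping,'' the coefficient $(2p_L+1)$, is obtained in the paper by counting, at each such flat $L_j$, one copy of $H_1(\C^2\setminus\{x^{d_{L_j}}=y^{d_{L_j}}\};\G)$ from $Y^0\cap Y_j^2$, $p_{L_j}$ copies from the intersections $Y_i^1\cap Y_j^2$, and $p_{L_j}=b_1(X_j^2)$ further copies from $H_2(Y_j^2;\G)$ via the K\"unneth formula.
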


\begin{proof}

Since $b_1(F)\geq d-1$, it is clear that $b_3(F) \leq 2-d-\chi(F)+b_2(F)$.

By the Wang sequence: $$...\rightarrow H_i(F)\xrightarrow{h_*-I} H_i(F) \rightarrow H_i(M(\A)) \rightarrow H_{i-1}(F)\rightarrow...$$

Then $$\dim H_2(F;\C)_1 = \dim H_2(M(\A);\C) - \dim H_1(F;\C)_{\neq 1} \leq b_2(M(\A)).$$

We find an upper bound for the non-trivial eigenspaces of $H_2(F;\C)$, using a Mayer-Vietoris argument. We only consider the summands in the Mayer-Vietoris spectral sequence which possibly contribute the non-trivial eigenspaces. By the divisibility result of Alexander modules, $H_2(M(\A);\G)$ is bounded above by the direct sum of local Alexander modules from only one component, say $H$, of $\A$, with $(k,l) = (3,0),(2,0),(2,1),(1,2)$. Now analyze the summands in the $E^1$ page of the corresponding spectral sequence:

\begin{itemize}
\item $E_{2,0}$:   $l=0$
\item $E_{1,1}$: 
\begin{enumerate}
\item $H_1(Y^0 \cap Y_j^1;\G)$:   $k=1,l\neq 2$
\item \textcolor{red}{$H_1(Y^0 \cap Y_j^2;\G)$:   will only consider the summand with $k=2,l=1$}
\item $H_1(Y^0 \cap Y_j^3;\G)$:   $k=3$, only consider summand with $l=0$
\item \textcolor{red}{$H_1(Y_i^1 \cap Y_j^2;\G)$:   will only consider the summand with $k=2,l=1$}
\item $H_1(Y_i^1 \cap Y^3;\G)$:   $k=3$, only consider summand with $l=0$
\item $H_1(Y_j^2 \cap Y^3;\G)$:   $k=3$, only consider summand with $l=0$
\end{enumerate}

\item $E_{0,2}$: 
\begin{enumerate}
\item $H_2(Y^0 ;\G)$:   $k=0$
\item \textcolor{red}{$H_2(Y_i^1;\G)$:   will only consider the summand with $k=1,l=2$}
\item \textcolor{red}{$H_2(Y_j^2 ;\G)$:   will only consider the summand with $k=2,l=1$}
\item $H_2(Y^3 ;\G)$:   $k=3$, only consider summand with $l=0$
\end{enumerate}

\end{itemize}

Only the four terms in red are involved in the upper bound of non-trivial eigenspaces.

\begin{itemize}

\item The (2,1) part of $H_1(Y^0 \cap Y_j^2;\G)$ is isomorphic to  $$H_1(\C^2\setminus \{x^{d_{L_j}}=y^{d_{L_j}}\};\G)^{b_0(Z_j^{0,2})}\cong \textcolor{red}{H_1(\C^2\setminus \{x^{d_{L_j}}=y^{d_{L_j}}\};\G)}$$

\item The (2,1) part of $H_1(Y_i^1 \cap Y_j^2;\G)$ is isomorphic to  $$H_1(\C^2\setminus \{x^{d_{L_j}}=y^{d_{L_j}}\};\G)^{b_0(Z_{i,j}^{1,2})}\cong H_1(\C^2\setminus \{x^{d_{L_j}}=y^{d_{L_j}}\};\G)$$ Since there are $p_{L_j}$ 2-flats in $L_j$, we obtain $$\textcolor{red}{H_1(\C^2\setminus \{x^{d_{L_j}}=y^{d_{L_j}}\};\G)^{p_{L_j}}}$$

\item The (2,1) part of $H_2(Y_j^2;\G)$ is isomorphic to  $$H_1(\C^2\setminus \{x^{d_{L_j}}=y^{d_{L_j}}\};\G)^{b_1(X_j^2)} \cong H_1(\C^2\setminus \{x^{d_{L_j}}=y^{d_{L_j}}\};\G)^{b_1(L_j\setminus \text{2-flats})}$$ Since $L_j\setminus \text{2-flats}$ is a central arrangement complement with $p_{L_j}$ components, we obtain $$\textcolor{red}{H_1(\C^2\setminus \{x^{d_{L_j}}=y^{d_{L_j}}\};\G)^{p_{L_j}}}$$

\item The (1,2) part of $H_2(Y_j^1;\G)$ is isomorphic to  $$H_2(\C^3\setminus \text{local central arrangement at the j-th 2-flat};\G)^{b_0(X_j^1)}$$ We obtain $\textcolor{red}{B_L}$, by summing up over all 2-flats in $H$.

\end{itemize}

Multiplicities of non-trivial roots of the the orders from above three terms yield the formula $$b_2(F) \leq b_2(\C^4\setminus \A) + min_{H_i}(\sum_{L\in H_i}(B_L+(2p_L+1)(d_L-2)((d,d_L)-1)))$$

\end{proof}

\begin{bibdiv}
\begin{biblist}

\bib{CDO}{article}{
title={Nonresonance conditions for arrangements},
author={Cohen, D.},
author={Dimca, A.},
author={Orlik, P.},
date={2003},
journal={Ann. Inst. Fourier (Grenoble)},
volume={53},
pages={1883-1896}
}

\bib{CS}{article}{
title={On Milnor fibrations of arrangements},
author={Cohen, D.},
author={Suciu, A.},
date={1993},
journal={J. London Math Soc.},
volume={51},
pages={105-119}
}

\bib{RF}{article}{
title={Regular Functions Transversal at Infinity},
author={Dimca, A.},
author={Libgober, A.},
journal={Tohoku Math. J.},
volume={58},
date={2006},
pages={549-564}
}

\bib{DB2}{book}{
title={Sheaves in Topology},
author={Dimca, A.},
date={2004},
series={University Text},
publisher={Springer}
}

\bib{DB}{book}{
title={Singularities and Topology of Hypersurfaces},
author={Dimca, A.},
date={1992},
series={University Text},
publisher={SpringerVerlag}
}

\bib{DN}{article}{
title={Hypersurface complements, Alexander modules and monodromy},
author={Dimca, Alexandru},
author={Nemethi, A.},
journal={Contemp.Math.},
volume={354},
date={2004},
publisher={Amer.Math.Soc}
}

\bib{AH}{book}{
title={Algebraic Topology},
author={Hatcher, A.},
date={2001},
publisher={Cambridge University Press}
}

\bib{AC}{article}{
title={Alexander polynomial of plane algebraic curves and cyclic multiple planes},
author={Libgober, Anatoly},
journal={Duke Mathematical Journal},
volume={49(4)},
date={1982},
pages={833-851}
}

\bib{EM}{article}{
title={Eigenvalues for the Monodromy of the Milnor fibers of Arrangements},
author={Libgober, A.},
journal={Trends in Singularities},
date={2002},
pages={141-150}
}

\bib{DM}{article}{
title={Perversity, duality, and arrangement in $\C^3$},
author={Massey, D.},
journal={Topology and its Applications},
volume={73},
date={1996},
pages={169-179}
}

\bib{MT}{article}{
title={Intersection Homology and Alexander Modules of Hypersurface complements},
author={Maxim, L.},
journal={Comment. Math. Helv.},
volume={81 no.1},
date={2005},
pages={123-155}
}

\bib{SP}{book}{
title={Singular points of Complex Hypersurfaces},
author={Milnor, J.},
date={1968},
series={Annuals of Mathematical Studies 61},
volume={50},
publisher={Princeton Univ. Press},
address={Princeton, NJ}
}

\bib{OS}{book}{
title={Arrangements of Hyperplanes},
author={Orlik, P.},
author={Terao, H.},
date={1992},
publisher={Springer Science \& Business Media}
}

\bib{3C}{article}{
title={The Milnor fibration of a hyperplane arrangement: from modular resonance to algebraic monodromy},
author={Stefan Papadima},
author={Alexander I. Suciu},
journal={arXiv:1401.0868v2}
}

\bib{RR}{article}{
title={Milnor fibers and Alexander polynomials of plane curves},
author={Randell, R.},
journal={Singularities},
volume={40},
date={1983},
pages={415-419}
}

\bib{HS}{article}{
title={Hyperplane arrangements and Milnor fibrations},
author={Suciu, A.},
journal={Annales de la Facult\'e des Sciences de Toulouse},
volume={23 no.2},
date={2014},
pages={417-481}
}

\bib{TT}{article}{
title={Twisted Alexander Polynomials of Hypersurface Complements},
author={Wong, K.},
journal={arXiv:1501.06065}
}

\end{biblist}
\end{bibdiv}

\end{document}